\title[The Griffiths bundle is group-theoretic]{The Griffiths bundle is generated by groups}
 \noindent \texttt{wushijig@gmail.com} \par
\newtheorem{theorem}[subsubsection]{Theorem}
\newtheorem{lemma}[subsubsection]{Lemma}
\newtheorem{corollary}[subsubsection]{Corollary}
\newtheorem{question}[subsubsection]{Question}
\theoremstyle{remark}
\newtheorem{rmk}[subsubsection]{Remark}
\newcommand{\GZip}{\mathop{\text{$G$-{\tt Zip}}}\nolimits}
\newcommand{\QVHS}{\mathop{\text{$\QQ$-$\mathsf{VHS}$}}\nolimits}
\newcommand{\GGZip}{\mathop{\text{$\GG$-{\tt Zip}}}\nolimits}
\newcommand{\GLnZip}{\mathop{\text{$GL(n)$-{\tt Zip}}}\nolimits}
\DeclareMathOperator{\pr}{pr}
\DeclareMathOperator{\opp}{opp}
\DeclareMathOperator{\Grif}{Grif}
\DeclareMathOperator{\grif}{grif}
\DeclareMathOperator{\ogrif}{\overline{grif}}
\newskip\procskipamount
\newskip\interskipamount
\newskip\refskipamount
\newcommand{\procskip}{\vskip\procskipamount}
\newcommand{\interskip}{\vskip\interskipamount}
\newcommand{\refskip}{\vskip\refskipamount}
\newcommand{\procbreak}{\par
   \ifdim\lastskip<\procskipamount\removelastskip
   \penalty-100
   \procskip\fi
   \noindent\ignorespaces}
\newcommand{\titlebreak}{\par%
\ifdim\lastskip<\interskipamount\removelastskip%
\penalty10000%
\interskip\fi%
\noindent}%
\newcommand{\interbreak}{\par%
\ifdim\lastskip<\interskipamount\removelastskip%
\penalty-100%
\interskip\fi%
\noindent\ignorespaces}%
\newcommand{\refbreak}{\par%
\ifdim\lastskip<\refskipamount\removelastskip%
\penalty-100%
\refskip\fi%
\noindent\ignorespaces}%
\newcounter{listcounter}
\newcounter{deflistcounter}
\newcounter{equivcounter}
\newskip{\itemsepamount}
\newskip{\topsepamount}
\newenvironment{assertionlist}{%
  \begin{list}
    {\upshape (\arabic{listcounter})}
    {\setlength{\leftmargin}{18pt}
     \setlength{\rightmargin}{0pt}
     \setlength{\itemindent}{0pt}
     \setlength{\labelsep}{5pt}
     \setlength{\labelwidth}{13pt}
     \setlength{\listparindent}{\parindent}
     \setlength{\parsep}{0pt}
     \setlength{\itemsep}{\itemsepamount}
     \setlength{\topsep}{\topsepamount}
     \usecounter{listcounter}}}
  {\end{list}}
\newenvironment{definitionlist}{%
  \begin{list}
    {\upshape (\alph{deflistcounter})}
    {\setlength{\leftmargin}{18pt}
     \setlength{\rightmargin}{0pt}
     \setlength{\itemindent}{0pt}
     \setlength{\labelsep}{5pt}
     \setlength{\labelwidth}{13pt}
     \setlength{\listparindent}{\parindent}
     \setlength{\parsep}{0pt}
     \setlength{\itemsep}{\itemsepamount}
     \setlength{\topsep}{\topsepamount}
     \usecounter{deflistcounter}}}
  {\end{list}}
\newenvironment{equivlist}{%
  \begin{list}
    {\upshape (\roman{equivcounter})}
    {\setlength{\leftmargin}{18pt}
     \setlength{\rightmargin}{0pt}
     \setlength{\itemindent}{0pt}
     \setlength{\labelsep}{5pt}
     \setlength{\labelwidth}{13pt}
     \setlength{\listparindent}{\parindent}
     \setlength{\parsep}{0pt}
     \setlength{\itemsep}{\itemsepamount}
     \setlength{\topsep}{\topsepamount}
     \usecounter{equivcounter}}}
  {\end{list}}
\newcommand{\Xcal}{{\mathcal X}}
\newcommand{\CC}{\mathbf{C}}
\newcommand{\GG}{\mathbf{G}}
\newcommand{\LL}{\mathbf{L}}
\newcommand{\QQ}{\mathbf{Q}}
\newcommand{\RR}{\mathbf{R}}
\renewcommand{\SS}{\mathbf{S}}
\newcommand{\TT}{\mathbf{T}}
\newcommand{\XX}{\mathbf{X}}
\newcommand{\ZZ}{\mathbf{Z}}
\newcommand{\Cscr}{{\mathscr C}}
\newcommand{\Gscr}{{\mathscr G}}
\newcommand{\Vscr}{{\mathscr V}}
\newcommand{\cent}{{\rm Cent}}
\newcommand{\gm}{\GG_{\textnormal m}}
\newcommand{\gmFbar}{\GG_{\textnormal{m}, \Fbar}}
\newcommand{\Fbar}{\overline{F}}
\newcommand{\fp}{\mathbf F_p}
\newcommand{\rk}{\mbox{ rk }}
\DeclareMathOperator{\stab}{Stab}
\newcommand{\gal}{{\rm Gal}}
\newcommand{\galf}{\gal(\overline{F}/F)}
\newcommand{\res}{\mathsf{Res}}
\DeclareMathOperator{\ad}{ad}
\DeclareMathOperator{\Ad}{Ad}
\newcommand{\der}{{\rm der}}
\newcommand{\Hom}{{\rm Hom}}
\newcommand{\id}{{\rm Id}}
\renewcommand{\Vec}{\mathsf{Vec}}
\newcommand{\Rep}{\mathsf{Rep}}
\newcommand{\Par}{\mathsf{Par}}
\newcommand{\Th}{{\rm Th.}}
\newcommand{\Chap}{{\rm Chap.}}
\newcommand{\Prop}{{\rm Prop.}}
\newcommand{\no}{{\rm n\textsuperscript{o}}.}
\newcommand{\loccit}{{\em loc.\ cit. }}
\newcommand{\loccitn}{{\em loc.\ cit.}}
\newcommand{\cf}{{\em cf. }}
\newcommand{\ie}{i.e.,\ }
\newcommand{\etc}{etc.\ }
\newcommand{\esp}{esp.\ }
\newcommand{\fil}{{\rm Fil}}
\newcommand{\Gr}{{\rm Gr}}
\newcommand{\Sh}{{\rm Sh }}
\newcommand{\dR}{{\rm dR}}
\renewcommand{\Im}{{\rm Im}}
\DeclareMathOperator{\class}{Class}
\DeclareMathOperator{\card}{Card}
\DeclareMathOperator{\aut}{Aut}
\author{Wushi Goldring}
\date{\today}
\let\c@equation=\c@subsubsection
\begin{document}
\pagestyle{plain}

\begin{abstract} First the Griffiths line bundle of a $\QQ$-VHS $\Vscr$ is generalized to a Griffiths character $\grif(\GG, \mu,r)$ associated to any triple $(\GG, \mu, r)$, where $\GG$ is a connected reductive group over an arbitrary field $F$, $\mu \in X_*(\GG)$ is a cocharacter (over $\Fbar$) and $r:\GG \to GL(V)$ is an $F$-representation; the classical bundle studied by Griffiths is recovered by taking $F=\QQ$, $\GG$ the Mumford-Tate group of $\Vscr$,  $r:\GG \to GL(V)$ the tautological representation afforded by a very general fiber and pulling back along the period map the line bundle associated to $\grif(\GG, \mu, r)$. The more general setting also gives rise to the Griffiths bundle in the analogous situation in characteristic $p$ given by a scheme mapping to a stack of $\GG$-Zips.
 
When $\GG$ is $F$-simple, we show that, up to positive multiples, the Griffiths character $\grif(\GG,\mu,r)$ (and thus also the Griffiths line bundle) is essentially independent of $r$ with central kernel, and up to some identifications is given explicitly by $-\mu$. As an application, we show that the Griffiths line bundle of a projective $\GGZip^{\mu}$-scheme is nef.
\end{abstract}
\maketitle
\tableofcontents
\pagebreak

\section{Introduction}
\label{sec intro}
We are motivated by the general problem of understanding which geometric objects are \emph{generated by groups}. We understand an object to be generated by groups if it is constructible and/or describable in terms of groups and associated data, such as subgroups, homogeneous spaces, representations, characters, cocharacters etc. In particular, in the case of reductive groups -- the focus of this paper -- we deem any geometric object which is describable in terms of root data of reductive groups as generated by groups.   

As a more precise example of the general problem, we begin this paper by stating some questions about the group-generation of   invariants of objects in a neutral Tannakian category. The paper is then concerned with showing that these questions have a particularly simple, explicit and positive answer when the invariant is the Griffiths line bundle of a variation of Hodge structure, or more generally the Griffiths character associated to a connected, reductive group $G$ over an arbitrary field, a cocharacter $\mu \in X_*(G)$ and a representation $r$ of $G$.
\subsection{Tannakian generation}
\label{sec-intro-tannakian}
There are many neutral Tannakian categories whose objects have been studied in algebraic geometry independently of Tannakian categories. A key example in this paper will be the category $\QVHS_{S}$ of variations of $\QQ$-Hodge structure over a smooth, projective $\CC$-scheme $S$.

Let $\Cscr$ be a  Tannakian category over a field $k$ which is neutralized by a fiber functor $\omega:\Cscr \to \Vec_k$. Let $G$ be the Tannaka group of $(\Cscr,\omega)$, \ie the affine $k$-group scheme which represents the functor of automorphisms $\underline{\aut}^{\otimes}(\omega)$ (see \cite[2.11]{Deligne-Milne-Tannakian}).
\begin{question}
\label{q-tannaka-1}
Given an invariant $i(X)$ associated to every object $X \in \Cscr$, is $i(X)$ generated by $G$ and additional group-theoretic data attached to $G$?
\end{question}
The prototypical type of additional group-theoretic data which we have in mind is a cocharacter $\mu \in X_*(G)$. If $G$ is reductive, then more generally any data deduced from a root datum of $G$ would qualify.

A somewhat more local variant of Question~\ref{q-tannaka-1} is the following: For every $X \in \Cscr$, let $G(X)$ be the Tannaka group of the Tannakian sub-category $\langle X \rangle ^{\otimes}$ generated by $X$. Then is $i(X)$ generated by $G(X)$ and group-theoretic data associated to $G(X)$? In this setting, one can even hope for more:
\begin{question}
\label{q-Tannakian-2} Assume some invariant $i$ is generated by $G(X)$ and some additional data associated to $G$. Is $i(X)$ essentially independent of $X$ (and dependent only on $G(X)$ and the additional data)?
\end{question}
A key component of Question~\ref{q-Tannakian-2} is of course to make precise the meaning of "essentially" in specific examples. The main result of this paper implies that Question~\ref{q-Tannakian-2} has a positive answer when $X$ is a $\QQ$-VHS, $i(X)$ is its Griffiths line bundle (\S\ref{sec-intro-grif}) and $G(X)$ is its Mumford-Tate group, provided the adjoint group $G(X)^{\ad}$ is $\QQ$-simple, see Theorem~\ref{th-main}. In this case, the additional group-theoretic data is the Hodge cocharacter $\mu \in X_*(G(X))$ and "essentially" means that the positive ray spanned by the Griffiths line bundle in the Picard group of the base is independent of $i(X)$ and dependent only on the pair $(G(X), \mu)$.
\subsection{The Griffiths bundle of a variation of Hodge structure}
\label{sec-intro-grif}
The Griffiths line bundle arose historically in Hodge theory, where it was used by Griffiths to study the algebraicity of the period map of a variation of Hodge structure \cite{Griffiths-IHES-period-integrals}. Suppose $S$ is a connected, smooth, finite-type $\CC$-scheme and $\Vscr$ is a (pure) polarized variation of Hodge structure on
$S$ with monodromy group $\Gamma$ and period domain $D$. Let $\fil^{\bullet} \Vscr$ be the (descending) Hodge filtration on $\Vscr$; for the sake of exposition suppose that
$\fil^{0}\Vscr=\Vscr$.
Griffiths (\loccitn, (7.13)) associated to $\Vscr$ the line bundle 
\begin{equation}
\label{eq-intro-def-grif}
\grif(\Vscr)=\det \bigoplus_{a \geq 1} \fil^a \Vscr=\det \bigoplus_{a \geq 1} (\Gr^a \Vscr)^{\oplus a}.
\end{equation}  We  call $\grif(\Vscr)$ the \underline{Griffiths line bundle}\footnote{In \loccitn, $\grif(\Vscr)$ was called the canonical bundle of $\Vscr$; in \cite{Green-Griffiths-Laza-Robles-compactification} and \cite{Bakker-Brunebarbe-Tsimerman-o-minimal-GAGA} it was called the Hodge (line) bundle.} of $\Vscr$.
Griffiths also associated to $\Vscr$ a period map \begin{equation}
\label{eq-intro-period-map}
    \Phi: S \to \Gamma \backslash D.
\end{equation}
By studying the positivity properties of the line bundle $\grif(\Vscr)$, Griffiths concluded that the image of the period map $\Phi(S)$ is projective algebraic when $\Gamma$ is discrete in $\aut(D)$ and the base $S$ is assumed projective
(\loccitn, (9.7); see also \cite[13.1.9]{carlson-muller-stach-peters-period-maps-domains}).\footnote{The condition "$\Gamma$ discrete in $\aut(D)$" is already necessary to ensure that $\Phi(S)$ is an analytic space.}
\begin{rmk}
\label{rmk-generalize-griffiths}
It is clear from the definition~\eqref{eq-intro-def-grif} that to define $\grif(\Vscr)$ requires much less than a polarized $\QQ$-VHS.
\begin{enumerate}
    \item The definition of $\grif(\Vscr)$ only depends on the associated graded $\Gr^{\bullet}\Vscr$.
    \item The transversality, polarization and $\QQ$-structure are not used: The same definition applies to any filtered or graded vector bundle on $S$. (By contrast the former data is crucial for defining the period map~\eqref{eq-intro-period-map}.)
\end{enumerate}
\end{rmk}
\begin{rmk}
\label{rmk-def-grif-sum-over-Z}
At least when the monodromy $\Gamma$ is discrete, it follows by pullback along the period map that the line bundle $\det\Vscr$ is trivial on $S$, see \S\ref{sec-Deligne-pairs-line-bundle}. Therefore one can replace the index set $a \geq 1$ in the sums~\eqref{eq-intro-def-grif} with $a \geq 0$ or $a \in\ZZ$; the resulting line bundle will be unchanged.
\end{rmk}
\begin{rmk}
\label{rmk-generalization-Griffiths-quasi-projective-noncompact}
For recent work generalizing that of Griffiths to the case that the base $S$ is only assumed quasi-projective, see the preprints \cite{Green-Griffiths-Laza-Robles-compactification}  and \cite{Bakker-Brunebarbe-Tsimerman-o-minimal-GAGA}. 
\footnote{It appears that the authors of \cite{Green-Griffiths-Laza-Robles-compactification} are working to fix an error in their preprint.}
\end{rmk}
\subsection{Summary of the paper}
\label{sec-summary}
Let $F$ be an arbitrary field. Consider triples $(\GG, \mu, r)$, where $\GG$ is a connected reductive $F$-group, $\mu \in X_*(\GG)$ is a cocharacter of $\GG_{\Fbar}$ and $r:\GG \to GL(V)$ is a morphism of $F$-groups.
In the vein of Remark~\ref{rmk-generalize-griffiths}, we explain in \S\ref{sec-griffiths-character} how to generalize the Griffiths line bundle to a character $\grif(\GG, \mu, r)$ of the Levi subgroup $\LL:=\cent_{\GG_{\Fbar}}(\mu)$ of $\GG_{\Fbar}$. In  \S\ref{sec-Deligne-pairs-line-bundle}, we describe how the Griffiths character gives rise to a Griffiths line bundle in two (a priori) different contexts: We recover the bundle $\grif(\Vscr)$ associated to a VHS via Deligne's theory of pairs $(G, \XX)$ and we obtain a Griffiths line bundle on stacks of $\GG$-Zips in characteristic $p>0$.

The main result is stated in \S\ref{sec-main-result}, see Theorem~\ref{th-main}. Roughly speaking, it states that $\grif(\GG, \mu, r)$ is, up to positive multiples and modulo the center, independent of $r$ and given explicitly by $-\mu$. To make this precise requires some technical assumptions and identifications concerning a root datum of $\GG$. For this purpose, the necessary structure theory associated to triples $(\GG, \mu, r)$ is given in \S\ref{sec-notation-structure-theory}. The sign change between $\mu$ and $\grif(\GG, \mu, r)$ reflects the change in positivity/curvature between a Mumford-Tate domain and its compact dual (Remark~\ref{rmk-sign-change}).

By combining our result with our forthcoming joint work with Y. Brunebarbe, J.-S. Koskivirta and B. Stroh \cite{Brunebarbe-Goldring-Koskivirta-Stroh-ampleness} on the positivity of automorphic bundles, we obtain the following application: Assume $X$ is a projective scheme in positive characteristic $p>0$ and $\zeta:X \to \GGZip^{\mu}$ is a morphism to the stack of $\GG$-Zips associated to $(\GG,\mu)$ by Pink-Wedhorn-Ziegler \cite{PinkWedhornZiegler-F-Zips-additional-structure,Pink-Wedhorn-Ziegler-zip-data}. As long as $p$ is not too small relative $\mu$ (orbitally $p$-close to be precise, \S\ref{sec-cond-characters}), then the Griffiths line bundle of $X$ is nef (Corollary~\ref{cor-nef}).

In \S\ref{sec-examples}, we give two examples of the main result: The first concerns the Hodge character and the Hodge line bundle. In the classical theory this amounts to the case that the VHS $\Vscr$ is polarized of weight one. Here we recover the results of our joint work with Koskivirta \cite{Goldring-Koskivirta-quasi-constant}. The second example provides explicit formulas for $\grif(\GG, \mu, r)$ when $r=\Ad$ is the adjoint representation, essentially in terms of the Coxeter number of the underlying root system.  

 The proof of Theorem~\ref{th-main} is given in \S\ref{sec-proof}. Some preliminary, general lemmas on roots and weights are given in \S\ref{sec-general-root-lemmas}; the proof proper occupies \S\ref{sec-proof-specific}. The key is to translate the main result into a statement about weight pairings with coroots (Lemma~\ref{lem-compute-grif-alpha}). The simply-laced case of Theorem~\ref{th-main} then results from a simple change of variables in the root pairing expression (Lemma~\ref{lem-sum-same-length}). The general case is reduced to the simply-laced one by the theory of root strings (Lemma~\ref{lem-sum-diff-length}). The application to nefness (Corollary~\ref{cor-nef}) is proved in \S\ref{sec-nefness}.
 
 When we sent P. Deligne a draft of this paper he quickly replied with a considerable simplification of the proof of the main result Theorem~\ref{th-main}\ref{item-ray-equality}. We are very grateful to Deligne for allowing us to include his simplification in Appendix~\ref{sec-deligne-simplification}.
 \section*{Acknowledgements}
 First, I want to thank my coauthors Y. Brunebarbe, J.-S. Koskivirta and B. Stroh for hours and hours of stimulating discussions on several topics related to this paper; in particular the idea for this work was born when Brunebarbe taught me about the Griffiths bundle and suggested to think of maps $\zeta:X \to \GZip^{\mu}$ as analogues of period maps. I am grateful to Koskivirta and J. Ayoub for comments on an earlier version of this work, which led to improvements in the statement of the main result.  I thank B. Moonen and T. Wedhorn for discussions about $G$-Zips and the link with classical Hodge theory. I am grateful to B. Klingler and J. Daniel for explaining to me the connection between the Griffiths bundle and Daniel's work on loop Hodge structures \cite{Daniel-loop-Hodge-structure-harmonic-bundles}; while we do not study this connection here, it would be interesting to understand the relationship between this paper and Daniel's work in the future.

This paper was completed during a visit to the University of Zurich. I thank the Institute of Mathematics for its hospitality and the opportunity to speak about this work.

We thank the referee for his/her work on the paper.

 Finally, it should be clear to the reader how much this paper owes to the works of Griffiths and Deligne. In addition to reshaping Hodge theory, we also thank them both for inspiring correspondence and discussions. In particular, I thank Griffiths for correspondence on positivity of Hodge bundles. I thank Deligne, first for sending me a two-line proof of the independence of $\grif(\GG,\mu,r)$ from $r$ when $\GG=GL(n)$, in response to my question about the dependence of $\grif(\GG,\mu,r)$ on $r$, and second for his simplification of the main result given in Appendix~\ref{sec-deligne-simplification}.
 \section{Notation and structure theory}
 \label{sec-notation-structure-theory}
\subsection{Notation}
\label{sec-notation}
Let $F$ be a field and fix an algebraic closure $\Fbar$ of $F$. A subscript $\Fbar, \QQ, \RR, \CC$ \etc will always denote base change to $\Fbar, \QQ, \RR, \CC$ respectively. Thus $\gmFbar$ denotes the multiplicative group scheme over $\Fbar$, and if $N$ is a $\ZZ$-module, then $N_{\QQ}:=N \otimes_{\ZZ}\QQ$ is the associated $\QQ$-vector space.

If $H$ is an algebraic $F$-group, then $X^*(H)=\Hom(H_{\Fbar},\gmFbar)$ (resp. $X_*(H)=\Hom(\gmFbar, H_{\Fbar})$) denotes the group of characters (resp. cocharacters) of $H_{\Fbar}$. Write $[\mu]$ for the $H(\Fbar)$-conjugacy class of a cocharacter $\mu \in X_*(H)$.

Similarly, $\Rep(H)$ denotes the category of representations of $H$ over $\Fbar$; write $\Rep_F(H)$ for the representations over $F$.

The unipotent radical of $H$ is denoted $R_u(H)$.

\subsection{Root data}
We follow \cite[\S2.1]{Goldring-Koskivirta-quasi-constant}, which in turn is based on \cite[Part II, \S1]{jantzen-representations}, \esp~1.18.
\subsubsection{Root datum of $\GG$}
\label{sec-root-datum-G}
Let $\TT$ be a maximal torus\footnote{The two possible interpretations of `maximal torus' here are equivalent \cite[Appendix A]{conrad-luminy-reductive-group-schemes}.} of $\GG$.
The root datum of $(\GG, \TT)$ is the quadruple
\begin{equation}
\label{eq-root-datum}
(X^*(\TT), \Phi; X_*(\TT), \Phi^{\vee}),
\end{equation}
together with the $\ZZ$-valued perfect pairing \begin{equation}
\label{eq-perfect-pairing}
    \langle, \rangle: X^*(\TT) \times X_*(\TT) \to \ZZ
\end{equation}
and the bijection $\Phi \to \Phi^{\vee}$, $\alpha \mapsto \alpha^{\vee}$,
where $\Phi:=\Phi(\GG, \TT)$ (resp. $\Phi^{\vee}:=\Phi^{\vee}(\GG, \TT)$) is the set of roots (resp. coroots) of $\TT_{\Fbar}$ in $\GG_{\Fbar}$.
\subsubsection{Weyl group}
\label{sec-Weyl-group} For every $\alpha \in \Phi$, let $s_{\alpha}$ be the corresponding root reflection. Let  $W:=W(\GG,\TT):= \langle s_{\alpha} | \alpha \in \Phi \rangle$ be the Weyl group of $\TT_{\Fbar}$ in $\GG_{\Fbar}$. 
Recall that $s_{\alpha} \leftrightarrow s_{\alpha^{\vee}}$ provides a canonical identification of $W$ with the dual Weyl group of the dual root datum generated by the $s_{\alpha^{\vee}}$.
\subsubsection{Weights}
\label{sec-weights}
If $V$ is an $F$-vector space and $r:\GG \to GL(V)$ is a morphism of $F$-groups, write $\Phi(V,\TT)$ for the set of $\TT_{\Fbar}$-weights in $V_{\Fbar}$ (not counting multiplicities). Given a weight $\chi \in \Phi(V, \TT)$, let $m_V(\chi)$ denote its multiplicity (the dimension of the corresponding weight space).

\subsubsection{Based root datum of $\GG$}
Let $\Delta \subset \Phi$ be a basis of simple roots. Then $\Delta^{\vee}:=\{\alpha^{\vee} \ | \ \alpha \in \Delta\}$ is the corresponding basis of simple coroots and
\begin{equation}
\label{eq-based-root-datum-G}
(X^*(\TT), \Delta, X_*(\TT), \Delta^{\vee})
\end{equation} is the based root datum of $(\GG, \TT, \Delta)$.

\subsection{Derived subgroup, adjoint quotient and simply-connected covering}
\label{sec-derived-subgroup}
Let $\GG^{\der}$ (resp. $\GG^{\ad}$, $\tilde \GG$) be the derived subgroup of $\GG$ (resp. its adjoint quotient, the simply-connected covering of  $\GG^{\der}$ in the sense of root data\footnote{One of several equivalent definitions is that the $\ZZ$-span of the coroots $\Phi^{\vee}$ is the whole cocharacter group.}). Let $s: \tilde \GG \to \GG$ be the natural quasi-section of the projection $\pr:\GG \to \GG^{\ad}$. The root datum~\eqref{eq-root-datum} and the based root datum~\eqref{eq-based-root-datum-G} naturally induce ones of $\GG^{\der}$, $\GG^{\ad}$ and $\tilde \GG$ as follows:
\subsubsection{Root datum of $G^{\der}$}
\label{sec-Gder-root-datum}
Let
\begin{equation}
\label{eq-def-char-Tder}
X_0(\TT):=\{\chi \in X^*(\TT) \ | \ \langle \chi, \alpha^{\vee} \rangle =0 \mbox{ for all } \alpha \in \Phi\}
\end{equation}
and
\begin{equation}
\label{eq-def-Tder}
\TT^{\der}:=\bigcap_{\chi \in X_0^*(\TT)} \ker \chi.
\end{equation}
Then $\TT^{\der}$ is a maximal torus in $\GG^{\der}$ with character group $X^*(\TT^{\der})=X^*(\TT)/X_0^*(\TT)$ and
\begin{equation}
\label{eq-root-datum-Gder}
(X^*(\TT^{\der}), \Phi; X_*(\TT^{\der}), \Phi^{\vee})
\end{equation}   is the root datum of $(\GG^{\der}, \TT^{\der})$, where the roots $\Phi$ are restricted to $\TT^{\der}$. Similarly, by restriction we identify $\Delta$ with a basis of simple roots for $(\GG^{\der}, \TT^{\der})$.
\subsubsection{Root data of $\GG^{\ad}$ and $\tilde{\GG}$}
\label{sec-Gad-tildG-root-data}
Let $\tilde \TT$ (resp. $\TT^{\ad}$) denote the preimage of $\TT^{\der}$ in $\tilde \GG$ (resp. the image of $\TT^{\der}$ in $\GG^{\ad}$). Then $\tilde \TT$ and $\TT^{\ad}$ are maximal tori in $\tilde \GG$ and $\GG^{\ad}$ respectively; the roots (resp. simple roots, coroots, simple coroots) of the three pairs $(\tilde \GG, \tilde \TT), (G^{\der}, \TT^{\der}),(\GG^{\ad}, \TT^{\ad})$ are identified via the central isogenies
\begin{equation}
\label{eq-central-isogenies}
(\tilde \GG, \tilde \TT) \to (\GG^{\der}, \TT^{\der}) \to(\GG^{\ad}, \TT^{\ad}).
\end{equation}
\subsubsection{Associated root system}
\label{sec-root-system}
The central isogenies~\eqref{eq-central-isogenies} induce canonical identifications
\begin{equation}
\label{eq-root-system}
X^*(\tilde \TT)_{\QQ}=X^*(\TT^{\der})_{\QQ}=X^*(\TT^{\ad})_{\QQ}
\textnormal{ and }
X_*(\tilde \TT)_{\QQ}=X_*(\TT^{\der})_{\QQ}=X_*(\TT^{\ad})_{\QQ}.
\end{equation}
In turn, the Weyl groups of the three pairs~\eqref{eq-central-isogenies} are all canonically identified with $W$. Choose a positive definite, symmetric, $W \rtimes \galf$-invariant bilinear form
\begin{equation}
\label{eq-def-bilinear-form}
    (,):X^*(\tilde \TT)_{\QQ} \times X^*(\tilde \TT)_{\QQ} \to \QQ.
\end{equation} When $\GG^{\ad}$ is $F$-simple, the form $(,)$ is the unique one up to positive scalar satisfying the properties above (one reduces to the well-known fact that there is a unique $W$-invariant, nondegenerate, symmetric form up to scaling when $\GG^{\ad}_{\Fbar}$ is simple; the latter follows from Schur's Lemma, because the natural representation $W \to GL(X^*(\tilde{\TT})_{\QQ})$ is then irreducible).

The triple $(X^*(\tilde\TT)_{\QQ}, \Phi, (,))$ is a root system associated  to $(\GG, \TT)$; its isomorphism type is independent of the choice of $(,)$.
\subsubsection{Fundamental weights}
\label{sec-fund-weights}
Given $\alpha \in \Delta$, write $\eta(\alpha)$ (resp. $\eta(\alpha^{\vee})$) for the corresponding fundamental weight (resp. fundamental coweight) of $(\tilde{\GG}, \tilde{\TT})$ in $X^*(\tilde \TT)_{\QQ}$  (resp. $X_*(\tilde \TT)_{\QQ}$) defined by
$\langle \eta(\alpha), \beta^{\vee} \rangle
=\delta_{\alpha \beta}=\langle \beta, \eta(\alpha^{\vee}) \rangle$ for all $\alpha, \beta \in \Delta$.
\subsection{Cocharacter data and associated subgroups}
\label{sec-cocharacter-Levi}
\subsubsection{Cocharacter data}
\label{sec-cocharacter-data}
Throughout much of this paper, we work with a pair $(\GG, [\mu])$, where $\mu \in X_*(\GG)$. Given such a pair, we choose a maximal torus $\TT$, a basis $\Delta \subset \Phi$ and a representative $\mu' \in [\mu]$ compatibly as follows: Choose $\TT$ over $F$ and $\mu'$ over $\Fbar$ such that $\Im(\mu') \subset \TT_{\Fbar}$; this is always possible because all maximal tori of $\GG_{\Fbar}$ are conjugate. In the presence of $\mu \in X_*(\GG)$, we always choose $\Delta$ so that $\mu$ is $\Delta$-dominant.
\subsubsection{Associated Levi subgroup}
\label{sec-associated-levi}
Given $\mu \in X_*(\GG)$, let $\LL$ be the Levi subgroup of $G_{\Fbar}$ given as the centralizer $\LL:=\cent_{\GG_{\Fbar}}(\mu)$. Let $\Delta_{\LL}:=\{\alpha \in \Delta \ | \ \langle \alpha, \mu \rangle =0 \}$. Then $\Delta_{\LL}$ is a basis of simple roots for $\TT_{\Fbar}$ in $\LL$.
\subsubsection{Parabolic subgroups and their flag varieties}
\label{sec-parabolics}
Let $I \subset \Delta$.
We define the standard parabolic subgroup of $\GG_{\Fbar}$ of type $I$ to be the subgroup generated by $\TT_{\Fbar}$ and the root groups $U_{\alpha}$ for $\alpha \in -\Delta \cup I$. In particular, the standard Borel subgroup is generated by $\TT_{\Fbar}$ and the root groups of negative roots.  Let $\Par_I$ be the flag variety of parabolics of $\GG_{\Fbar}$ of type $I$.
\subsubsection{Orbitally $p$-close cocharacters}
\label{sec-cond-characters}
Let $\mu \in X_*(\GG)$ and let $p$ be a prime number. Recall from \cite[\S N.5.3]{Goldring-Koskivirta-Strata-Hasse} (see also \cite[\S5.1]{Goldring-Koskivirta-quasi-constant})
that $\mu$ is called \underline{orbitally $p$-close} if, for every $\sigma \in W \rtimes \galf$ and every root $\alpha \in \Phi$ satisfying $\langle \alpha, \mu \rangle \neq 0$, one has \begin{equation}
\label{eq-def-orb-p-close}
    \left| \frac{\langle \sigma \alpha, \mu \rangle}{\langle  \alpha, \mu \rangle} \right| \leq p-1.
\end{equation}
A cocharacter which is orbitally $p$-close for all $p$ is called \underline{quasi-constant}. The condition "orbitally $p$-close" is a weakening of certain $p$-smallness conditions, while "quasi-constant" generalizes "minuscule". See \cite{Goldring-Koskivirta-quasi-constant} for more on quasi-constant (co)characters.
\section{The Griffiths character, the Griffiths bundle and the main result}
\label{sec-griffiths-character-main-result}
\subsection{The Griffiths character}
\label{sec-griffiths-character}
\subsubsection{Notation for real Hodge structures}
\label{sec-real-HS}
Let $\SS=\res_{\CC/\RR} \gm$. Recall that an $\RR$-Hodge structure consists of a pair $(V, h)$, where $V$ is a finite-dimensional $\RR$-vector space and $h:\SS \to GL(V)$ is a morphism of $\RR$-groups. Denote the (descending) Hodge filtration of $(V,h)$ on $V_{\CC}$ by $\fil^{\bullet} V_{\CC}$.  Let $\mu(z):=(h \otimes \CC)(z, 1)$ be the associated cocharacter of $GL(V_{\CC})$. By Deligne's convention, $\mu$ acts on the graded piece $\Gr^a V_{\CC}:=\fil^a V_{\CC}/\fil^{a+1} V_{\CC}$ by $z^{-a}$.
Let $\mu_{\max}$ (resp. $\mu_{\min}$) be the largest (resp. smallest) $\mu$-weight in $V_{\CC}$. Then
\begin{equation}
\label{eq-hodge-filt-min}
\fil^{-\mu_{\max}} V_{\CC}=V_{\CC}, \hspace{1cm} \fil^{1-\mu_{\min}} V_{\CC}=(0)
\end{equation}
and
$-\mu_{\max}$ (resp. $1-\mu_{\min}$) is characterized as the largest (resp. smallest)  integer satisfying~\eqref{eq-hodge-filt-min} (in other words, the Hodge filtration descends from $-\mu_{\max}$ to $-\mu_{\min}$).
\subsubsection{The Griffiths character for Deligne pairs}
\label{sec-Deligne-pairs}
Fix a pair $(G, \XX)$, where $G$ is a connected, reductive $\RR$-group  and $\XX:=\class_{\GG(\RR)}(h)$ is the $G(\RR)$-conjugacy class of a morphism of $\RR$-groups $h: \SS \to G$. The reinterpretation of much of Griffiths' work in terms of such pairs $(G,\XX)$ was introduced by Deligne in his Bourbaki talk \cite{Deligne-Travaux-Griffiths}.

Given $h \in \XX$, redefine $\mu(z):=(h \otimes \CC)(z,1) \in X_*(G)$ as the associated cocharacter of $G_{\CC}$.
Let $r:G \to GL(V)$ be a morphism of $\RR$-groups (later we will want to assume that $G,r$ both arise by base change from objects over $\QQ$).
Then $r \circ h$ is an $\RR$-Hodge structure. Define the \underline{Griffiths module} of $(G,h,r)$ by
\begin{equation}
\label{eq-def-Grif}
\Grif(G, h,r):=
\sum_{1-(r\circ \mu)_{\max}}^{-(r\circ \mu)_{\min}}
\fil^a V_{\CC}
\end{equation}
and the \underline{Griffiths character} of $(G,h,r)$ by
\begin{equation}
\label{eq-def-grif}
\grif(G, h, r):=\det \Grif(G, h, r).
\end{equation}

Let $L$ be the $\CC$-group $L:=\cent_{G_{\CC}}(\mu)$. Then $\Grif(G, h, r)$ is an $L$-module and $\grif(G,h,r) \in X^*(L)$ is a character of $L$. The isomorphism class of the $L$-module $\Grif(G, h,r)$ and the conjugacy class $[\grif(G, h, r)]$ do not depend on the choice of $h \in \XX$.
\subsubsection{Central kernel assumption}
\label{sec-central-kernel}
We shall always assume that $r:G \to GL(V)$ has central kernel; otherwise the component of the Griffiths character corresponding to some $\CC$-simple factor of $\tilde{\GG}_{\CC}$ will be trivial. For example $r=1$ trivial should clearly be avoided, for then $\grif(G,h,1)=0$ in $X^*(L)$.
\subsubsection{Generalization to arbitrary fields}
\label{sec-arb-fields}
Since the Hodge filtration on $V_{\CC}$ and the cocharacter $\mu$ uniquely determine each other, we can generalize the Griffiths module and character to the setting of arbitrary cocharacter data over arbitrary fields by working with $\mu$ instead of $h$.
Thus let $F$ be a field and fix an algebraic closure $\Fbar$ (no restriction is imposed on the characteristic of $F$). Let $\GG$ be a connected, reductive $F$-group and let $\mu \in X_*(\GG)$. For every $\Fbar$-vector space $V$ and every morphism $r:\GG_{\Fbar} \to GL(V)$ of $\Fbar$-groups, we have the cocharacter $r \circ \mu$ of $GL(V)$ and the corresponding descending filtration $\fil^{\bullet} V$ on $V$ given by
\begin{equation}
\label{eq-def-filt-from-cochar}
\fil^aV=\bigoplus_{a' \geq a}V_{-a'},
\end{equation}
where $V_{b}=\{v \in V \ | \ (r\circ \mu)(z)v=z^{b}v\}$ is the $b$-weight space of $r\circ \mu$ acting on $V$.

Given $r$ with central kernel (\S\ref{sec-central-kernel}), define the Griffiths module $\Grif(\GG, \mu, r)$  as in~\eqref{eq-def-Grif} and set  the Griffiths character to be its determinant: $\grif(\GG, \mu, r):=
\det \Grif(\GG, \mu, r)$.

As before, let $\LL:=\cent_{\Fbar}(\mu)$. Then
$\Grif(\GG,\mu,r)$ is an $\LL$-module and $\grif(\GG, \mu, r) \in X^*(\LL)$ for every triple $(\GG, \mu, r)$.
\subsection{Griffiths line bundles associated to Griffiths characters}
\label{sec-Deligne-pairs-line-bundle}
We explain how the Griffiths character gives rise to a line bundle by an associated bundle construction in two different settings: First we explain how to recover the Griffiths line bundle in the classical setting recalled in \S\ref{sec-intro-grif}. Then we describe the Griffiths line bundle in the context of $G$-Zips in positive characteristic.
\subsubsection{Associated sheaves}
\label{sec-associated-sheaves}
In both cases, one uses the following basic construction: Suppose $X$ is an $\Fbar$-scheme with an action of an algebraic $\Fbar$-group $H$. Then there is an exact tensor functor from $\Rep(H)$ to the category of vector bundles on the quotient stack $[H \backslash X]$ (\cite[N.4.1]{Goldring-Koskivirta-Strata-Hasse}, see also \cite[Part I, 5.8]{jantzen-representations} in the case $H$ acts freely on $X$). An equivalent variant is: The quotient stack $[H \backslash X]$ is equipped with a tautological $H$-torsor $I_H$; given $V \in \Rep(H)$, the pushout of $I_H$ via $V$ gives a $GL(V)$-torsor on $[H \backslash X]$, which is the torsor of bases of the vector bundle on $[H \backslash X]$ associated to $V$.
\subsubsection{Associated line bundle I: $F=\QQ$. The Borel embedding, d'apr\`es Deligne \cite[\S\S5.6-5.10]{Deligne-Travaux-Griffiths} } 
Return to the setting of \S\S\ref{sec-real-HS}-\ref{sec-Deligne-pairs}: 
$G$ is an $\RR$-group and $h:\SS \to G$, $r:G \to GL(V)$ are morphisms of $\RR$-groups; $\XX$ is the $G(\RR)$-conjugacy class of $h$.
Assume $r$ has central kernel and that  $r \circ h$ is a pure $\RR$-HS.

Then $P(h)=\stab_{G_{\CC}}(\fil^{\bullet} V_{\CC})$
is a parabolic subgroup of $G_{\CC}$ which is independent of $(V,r)$.
Let $I \subset \Delta$ be the type of $P(h)$. Then $I$ is independent of $h \in \XX$.
The compact dual of $\XX$ is the flag
variety $\check{\XX}:=\Par_I$.

The Borel embedding is the injection $\iota:\XX \to \check{\XX}(\CC)$ given by $h \mapsto P(h)$; it identifies $\XX$ with an open subset of $\check{\XX}(\CC)$, \cite[Lemma 5.8]{Deligne-Travaux-Griffiths}.

Applying~\S\ref{sec-associated-sheaves} to the tautological $P(h)$-torsor on $\Par_I=\check{\XX}$, combined with the functor $\Rep(L) \to \Rep(P)$ given by extending trivially on the unipotent radical $R_u(P)$, one gets a diagram of exact tensor functors:
\begin{equation}
\label{eq-associated-bundles}
\xymatrix{
\Rep(P) \ar[r]
&
\left(\parbox{2.5cm}{\centering $G_{\CC}$-equivariant\\ vector bundles \\ on $\check{\XX}$ } \right)
\ar^-{\iota^*}[d]
\\
\Rep(L) \ar[u] \ar@{-->}[r]
&
\left(\parbox{2.5cm}{\centering $G(\RR)$-equivariant\\ vector bundles \\ on $\XX$ } \right)
}
\end{equation}

Applying~\eqref{eq-associated-bundles} to the Griffiths character $\grif(G, h, r)=\grif(G, \mu,r)$ gives a $G(\RR)$-equivariant line bundle $\grif(\XX, r)$ on $\XX$.

The line bundle $\grif(\XX,r)$ is the line bundle associated to the tautological family of Hodge structures on $\XX$ (with fiber $r\circ h$ at $h \in \XX$) via \S\ref{sec-intro-grif}. However, this family of Hodge structures is rarely a VHS (\ie it rarely satisfies transversality): assuming that $\ad h$ is of weight $0$, the tautological family of HS over $X$ is a VHS if and only if $\ad h$ is of type $\{(1,-1),(0,0),(-1,1)\}$, \cite[\Prop~1.1.14]{Deligne-Shimura-varieties}.
Finally, one obtains the line bundle $\grif(\Vscr)$ of a $\Vscr \in \QVHS_S$ (\S\ref{sec-intro-grif}) by pullback along the period map~\eqref{eq-intro-period-map}: For every period domain $D$ (more generally for every Mumford-Tate domain $D$), there exists a Deligne pair $(G, \XX)$ such that $D=\XX^+$ (resp. $\aut(D)=G^{\ad}(\RR)^+$) is a classical topology connected component of $\XX$ (resp. $G(\RR)$).
\subsubsection{Associated line bundle II: $F=\fp$. $\GGZip^{\mu}$-schemes, d'apr\`es Pink-Wedhorn-Ziegler \cite{Pink-Wedhorn-Ziegler-zip-data,PinkWedhornZiegler-F-Zips-additional-structure}}
\label{sec-def-GZip-schemes}
Let $F=\fp$ and $\mu \in X_*(\GG)$.
Up to possibly conjugating $\mu$, we assume fixed a compatible choice of $\mu, \TT, \Delta$ as in \S\ref{sec-cocharacter-data}. Let $\GGZip^{\mu}$ be the associated stack of $\GG$-Zips of type $\mu$.
Let $P$ be the standard parabolic of type $\Delta_{\LL}$ (\S\ref{sec-parabolics}), $P^{\opp}$ its opposite relative $\LL$ and put $Q:=(P^{\opp})^{(p)}$.  Recall that a $\GG$-Zip of type $\mu$ on an $\Fbar$-scheme $S$ is a quadruple $(I,I_P,I_Q, \varphi)$, where $I$ is a $\GG$-torsor on $S$, $I_{P}$ (resp. $I_{Q}$) is a $P$-structure (resp. $Q$-structure) on $I$ and $\varphi:(I_P/R_u(P))^{(p)} \to I_Q/R_u(Q)$ is an isomorphism of $\LL^{(p)}$-torsors (\S\ref{sec-notation}).

Since part of the datum of a $\GG$-Zip of type $\mu$ is a $P$-torsor, every representation of $\LL$ yields a vector bundle on $\GGZip^{\mu}$ via \S\ref{sec-associated-sheaves}. We define the Griffiths line bundle $\grif(\GGZip^{\mu}, r)$ of $\GGZip^{\mu}$ to be the line bundle associated to the Griffiths character $\grif(\GG, \mu, r)$. If $X$ is an $\Fbar$-scheme and $\zeta:X \to \GGZip^{\mu}$ is a morphism, we define the Griffiths line bundle of $(X,\zeta)$ by pullback: $\grif(X, \zeta,r):=\zeta^*\grif(\GGZip^{\mu},r)$.
\subsubsection{$\GGZip^{\mu}$-schemes from de Rham cohomolgy, d'apr\`es Moonen-Wedhorn \cite{Moonen-Wedhorn-Discrete-Invariants} and Pink-Wedhorn-Ziegler \cite{PinkWedhornZiegler-F-Zips-additional-structure,Pink-Wedhorn-Ziegler-zip-data}}
\label{sec-GZip-schemes-from-de-Rham}
In order for \S\ref{sec-def-GZip-schemes} to be useful, one needs an interesting supply of morphisms $\zeta:X \to \GZip^{\mu}$.
In analogy with \S\ref{sec-intro-grif}, we recall how maps $\zeta:X \to \GGZip^{\mu}$ arise from de Rham cohomology in characteristic $p$, see also the introduction to \cite{Goldring-Koskivirta-global-sections-compositio}.
Suppose $\pi: Y \to X$ is a proper smooth morphism of schemes in characteristic $p$, that the Hodge-de Rham spectral sequence of $\pi$ degenerates at $E_1$ and that both the Hodge and de Rham cohomology sheaves of $\pi$ are locally free. Let $n=\rk H^i_{\dR}(Y/X)$ and consider the conjugacy class $[\mu]$ of cocharacters of $GL(n)$ whose $-a$-weight space has dimension $\rk R^{i-a} \pi_*\Omega_{Y/X}^a$.
Then $H^i_{\dR}(Y/X)$ is a $GL(n)$-Zip of type $\mu$; thus it determines a morphism $\zeta:X \to \GLnZip^{\mu}$.

The analogy between $\zeta$ and the period map $\Phi$ (\S\ref{sec-intro-grif}) was first suggested by Moonen-Wedhorn in the introduction to \cite{Moonen-Wedhorn-Discrete-Invariants}. We thank Y. Brunebarbe for suggesting to pursue this analogy further. It is an interesting open problem to understand what should be the right analogue of the Mumford-Tate group for $\zeta$. Still, if the Hodge filtration is compatible with certain tensors, then $\zeta$ will factor through a stack of $\GG$-Zips, where $\GG \subset GL(n)$ is the subgroup stabilizing those tensors. For example, as already observed in \cite{Moonen-Wedhorn-Discrete-Invariants}, when $Y/X$ is a family of polarized abelian schemes (resp. $K3$ surfaces) then $\zeta$ factors through a stack of $\GG$-Zips, where $\GG$ is a symplectic similitude group (resp.  $\GG=SO(21)$).

\subsection{Main result}
We continue to use the notation for root data introduced in \S\ref{sec-notation-structure-theory}; we always choose $\mu,\TT, \Delta$ compatibly as in \S\ref{sec-cocharacter-data} and the pairing $(,)$ on $X^*(\tilde \TT)_{\QQ}$ is always chosen $W \rtimes \galf$-invariant and positive definite (\S\ref{sec-root-system}).
\label{sec-main-result}
\subsubsection{The Griffiths ray}
Let \begin{equation}
    \ogrif(\GG, \mu, r):=\QQ_{>0} s^* \grif (\GG, \mu, r)
\end{equation}
be the positive ray generated by $s^* \grif (\GG, \mu, r)$ in $X^*(\tilde \TT)_{\QQ}$. We call $\ogrif(\GG, \mu, r)$ the \underline{Griffiths ray} of $(\GG, \mu, r)$.
More generally, if $v$ is a vector of a $\QQ$-vector space, write $\langle v \rangle:=\QQ_{>0}v$ for the ray which it generates.

Write $\mu^{\ad}:=\pr \circ \mu$ for the projection of $\mu$ onto $\GG_{\Fbar}^{\ad}$. Via the identifications~\eqref{eq-root-system}, one has $\mu^{\ad} \in X_*(\tilde \TT)_{\QQ}$.
\subsubsection{Weight pairing sums}
\label{sec-weight-pairing-sums}
Given a root $\gamma \in \Phi$ and a representation $r:\GG \to GL(V)$, recall \S\ref{sec-weights} and let
\begin{equation}
\label{eq-weight-pairing-sum}
S(\gamma^{\vee}, r)
:=
\sum_{\chi \in \Phi(V, \TT)}m_V(\chi)\langle \chi, \gamma^{\vee} \rangle ^2.
\end{equation}
Since $\langle s_{\alpha} \chi, \alpha^{\vee} \rangle=-\langle \chi, \alpha^{\vee}\rangle$, each summand in~\eqref{eq-weight-pairing-sum} is invariant under $\chi \mapsto s_{\alpha} \chi$.

\begin{theorem}
\label{th-main}
Let $\GG$ be a connected, reductive $F$-group. Assume $\GG^{\ad}$ is $F$-simple, $\mu \in X_*(\GG)$ and $r \in \Rep_F(\GG)$ is a representation of $\GG$ over $F$ with central kernel. Then:
\begin{enumerate}
\item
\label{item-grif-fund-weights}
For all $\alpha \in \Delta$, one has
\begin{equation}
\label{eq-grif-fund-weights}
\langle \grif(\GG, \mu, r), \alpha^{\vee} \rangle
=
-\frac{1}{2}\langle \alpha, \mu \rangle S(\alpha^{\vee},r).
\end{equation}
\item
\label{item-indep-alpha}
The value $(\alpha, \alpha)S(\alpha^{\vee},r)$ is independent of $\alpha \in \Phi$.
\item
\label{item-griffiths-explicit} Under the identification $X^*(\tilde{\TT})_{\QQ} \cong X_*(\tilde{\TT})_{\QQ}$ afforded by $(,)$, for every $\alpha \in \Phi$ one has
\begin{equation}
\label{eq-explicit-griffiths}
s^*\grif(\GG, \mu, r)
=
-\frac{(\alpha, \alpha)}{4}S(\alpha^{\vee},r) \mu^{\ad} \hspace{.5cm} \textnormal{ in } X^*(\tilde{\TT})_{\QQ}.
\end{equation}
\item
\label{item-ray-equality}
In particular,
\begin{equation}
\ogrif(\GG, \mu, r)
=
-\langle \mu^{\ad} \rangle \hspace{.5cm}\textnormal{ in } X^*(\tilde \TT)_{\QQ}.
\end{equation}
\end{enumerate}
\end{theorem}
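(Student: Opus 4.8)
The plan is to reduce all four parts to one explicit evaluation of the pairings $\langle\grif(\GG,\mu,r),\alpha^\vee\rangle$, $\alpha\in\Phi$, and then extract~\ref{item-grif-fund-weights}--\ref{item-ray-equality} in turn. First I would rewrite the Griffiths character as a weighted sum of $\TT$-weights. By~\eqref{eq-def-Grif}--\eqref{eq-def-grif} together with the weight decomposition~\eqref{eq-def-filt-from-cochar}, each weight space $V_\chi$ of $r$ (with $r\circ\mu$-weight $\langle\chi,\mu\rangle$) occurs in $\Grif(\GG,\mu,r)$ with multiplicity equal to an affine function of slope $-1$ in its weight, say $C-\langle\chi,\mu\rangle$, where $C$ depends only on the chosen range of summation. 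Passing to determinants gives, in $X^*(\TT)$,
\begin{equation*}
\grif(\GG,\mu,r)=C\cdot(\det V)-\sum_{\chi\in\Phi(V,\TT)}m_V(\chi)\langle\chi,\mu\rangle\,\chi,
\end{equation*}
where I write $\det V=\sum_\chi m_V(\chi)\chi$ for the character of the top exterior power. By Remark~\ref{rmk-def-grif-sum-over-Z} the value of $C$ is immaterial; more importantly, for every root $\alpha$ one has $\langle\det V,\alpha^\vee\rangle=\sum_\chi m_V(\chi)\langle\chi,\alpha^\vee\rangle=0$, since $\chi\mapsto s_\alpha\chi$ preserves $m_V$ while negating $\langle\chi,\alpha^\vee\rangle$. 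Thus the $C$-term disappears from every coroot pairing.

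To prove~\ref{item-grif-fund-weights} I would pair the displayed identity with $\alpha^\vee$ and organize the surviving sum $\sum_\chi m_V(\chi)\langle\chi,\mu\rangle\langle\chi,\alpha^\vee\rangle$ into $s_\alpha$-orbits. Weights with $\langle\chi,\alpha^\vee\rangle=0$ drop out, and the rest fall into two-element orbits $\{\chi,s_\alpha\chi\}$ of equal multiplicity; using $s_\alpha\chi=\chi-\langle\chi,\alpha^\vee\rangle\alpha$ and $\langle s_\alpha\chi,\mu\rangle=\langle\chi,\mu\rangle-\langle\chi,\alpha^\vee\rangle\langle\alpha,\mu\rangle$, each orbit is seen to contribute $m_V(\chi)\langle\chi,\alpha^\vee\rangle^2\langle\alpha,\mu\rangle$. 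Since the same orbit contributes $2m_V(\chi)\langle\chi,\alpha^\vee\rangle^2$ to the sum~\eqref{eq-weight-pairing-sum} defining $S(\alpha^\vee,r)$, I obtain $\sum_\chi m_V(\chi)\langle\chi,\mu\rangle\langle\chi,\alpha^\vee\rangle=\tfrac12\langle\alpha,\mu\rangle S(\alpha^\vee,r)$, and hence~\eqref{eq-grif-fund-weights}. This orbit bookkeeping is the computational core of the argument.

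For~\ref{item-indep-alpha} I would avoid a root-by-root analysis and instead recognize $S(\cdot,r)$ as coming from a canonical invariant form. Writing $\langle\chi,\alpha^\vee\rangle=2(\bar\chi,\alpha)/(\alpha,\alpha)$, where $\bar\chi$ is the image of $\chi$ in $X^*(\tilde\TT)_\QQ$, gives $S(\alpha^\vee,r)=\tfrac{4}{(\alpha,\alpha)^2}\,\tilde B(\alpha,\alpha)$ with $\tilde B(\lambda,\nu):=\sum_\chi m_V(\chi)(\bar\chi,\lambda)(\bar\chi,\nu)$ a symmetric bilinear form on $X^*(\tilde\TT)_\QQ$. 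Because the weight multiset of $r$ is stable under $W$ and, $r$ being defined over $F$, under $\galf$, the form $\tilde B$ is $W\rtimes\galf$-invariant, and it is positive definite because the central-kernel hypothesis forces the weights of $r$ to span $X^*(\tilde\TT)_\QQ$. The uniqueness of such a form up to positive scalar when $\GG^{\ad}$ is $F$-simple (\S\ref{sec-root-system}) then yields $\tilde B=c\,(,)$ for a single scalar $c>0$; consequently $S(\alpha^\vee,r)=4c/(\alpha,\alpha)$, so $(\alpha,\alpha)S(\alpha^\vee,r)=4c$ is independent of $\alpha$. I expect the genuine obstacle to lie here: matching the values of $S(\alpha^\vee,r)$ across roots of different lengths is exactly what a direct computation would struggle with, and the invariant-form argument is what renders it transparent.

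Finally, parts~\ref{item-griffiths-explicit} and~\ref{item-ray-equality} are formal. By~\ref{item-indep-alpha} the scalar $\tfrac{(\alpha,\alpha)}{4}S(\alpha^\vee,r)=c$ is constant, so~\eqref{eq-explicit-griffiths} asserts $s^*\grif(\GG,\mu,r)=-c\,\mu^{\ad}$ under the identification $X^*(\tilde\TT)_\QQ\cong X_*(\tilde\TT)_\QQ$. Since $\tilde\GG$ is simply connected, the coroots span $X_*(\tilde\TT)_\QQ$, so it suffices to check this equality after pairing with each $\alpha^\vee$; the left side gives $-\tfrac12\langle\alpha,\mu\rangle S(\alpha^\vee,r)$ by~\ref{item-grif-fund-weights} (noting $\langle s^*\grif,\alpha^\vee\rangle=\langle\grif,\alpha^\vee\rangle$ and $\langle\alpha,\mu\rangle=\langle\alpha,\mu^{\ad}\rangle$), while the right side, evaluated through $\alpha^\vee=2\alpha/(\alpha,\alpha)$ and $(\alpha,\alpha)S(\alpha^\vee,r)=4c$, gives the same value. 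This proves~\ref{item-griffiths-explicit}, and~\ref{item-ray-equality} is immediate, since~\eqref{eq-explicit-griffiths} exhibits $s^*\grif(\GG,\mu,r)$ as the strictly negative multiple $-c\,\mu^{\ad}$ of $\mu^{\ad}$, whence $\ogrif(\GG,\mu,r)=-\langle\mu^{\ad}\rangle$.
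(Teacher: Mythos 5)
Your proof is correct. Part~\ref{item-grif-fund-weights} is argued exactly as in the paper's Lemma~\ref{lem-compute-grif-alpha} (multiplicity $(r\circ\mu)_{\max}-\langle\chi,\mu\rangle$, vanishing of the $\det V$ term, grouping into $s_\alpha$-orbits), and your deduction of~\ref{item-griffiths-explicit} and~\ref{item-ray-equality} from~\ref{item-grif-fund-weights} and~\ref{item-indep-alpha} matches the paper's formal reduction at the start of \S\ref{sec-proof-specific}. Where you genuinely diverge is part~\ref{item-indep-alpha}: the paper's main text first handles roots of equal length by $W\rtimes\galf$-conjugacy (Lemma~\ref{lem-sum-same-length}) and then compares the two lengths in the multi-laced case by a root-string computation in $\Phi^{\vee}$, solving the linear system~\eqref{eq-s-t-1}--\eqref{eq-s-t-2} separately for doubly- and triply-laced diagrams (Lemma~\ref{lem-sum-diff-length}). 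Your alternative---recognizing $S(\alpha^{\vee},r)=\tfrac{4}{(\alpha,\alpha)^2}\tilde B(\alpha,\alpha)$ for the $W\rtimes\galf$-invariant, positive definite form $\tilde B$ built from the weight multiset, and invoking uniqueness of such a form up to positive scalar (\S\ref{sec-root-system})---is essentially Deligne's simplification reproduced in Appendix~\ref{sec-deligne-simplification}, with two pleasant refinements: by defining the form directly on $X^*(\tilde\TT)_{\QQ}$ via the images $\bar\chi$ you avoid the reduction to the semisimple case needed in \S\ref{sec-app-reduction-semisimple} (central kernel suffices for positive definiteness there, via Lemma~\ref{lem-torus-rep}), and by extracting the scalar $c$ with $\tilde B=c\,(,)$ you obtain the precise constant in~\ref{item-indep-alpha} and~\ref{item-griffiths-explicit}, whereas the appendix as written only recovers the ray statement~\ref{item-ray-equality}. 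The trade-off is the usual one: the paper's root-string route is elementary and makes the values in Table~\ref{table-root-pairing-sums} visibly computable case by case, while your invariant-form route is uniform in the lacing and shorter, at the cost of leaning on Schur's Lemma for the irreducibility of $W\rtimes\galf$ acting on $X^*(\tilde\TT)_{\QQ}$.
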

\begin{rmk} \
\begin{enumerate}
    \item
Both of the terms $(\alpha, \alpha)$ and $\mu^{\ad}$, when viewed in $X^*(\tilde{\TT})_{\QQ}$, depend on $(,)$. Further, the dependence among the two is inverse proportional, so the right-hand side of~\eqref{eq-explicit-griffiths} is independent of $(,)$
\item For fixed $r$, the value $(\alpha, \alpha) S(\alpha^{\vee},r)$ depends on $\Phi \subset X^*(\tilde \TT)_{\QQ}$, not just on the isomorphism class of the root system $(X^*(\tilde{\TT})_{\QQ}, \Phi, (,))$.
\end{enumerate}
\end{rmk}

\begin{rmk}
\label{rmk-sign-change}
The sign difference between the cocharacter $\mu$ and the Griffiths character $\grif(\GG, \mu, r)$ is a reflection of the change in curvature/positivity between a period domain $\XX$ (or more generally a Mumford-Tate domain, or a Griffiths-Schmid manifold) and its compact dual $\check{\XX}$. For example, the Hodge line bundle is ample on a Hodge type Shimura variety (\S\ref{sec-hodge-line-bundle}), but over $\CC$ it arises via the construction~\ref{eq-associated-bundles} from a line bundle on $\Check{\XX}$ which is anti-ample on $\Check{\XX}$.
\end{rmk}
Under the same hypotheses, two immediate corollaries of Theorem~\ref{th-main} are:
\begin{corollary}[Independence]
\label{cor-independence} \
\begin{enumerate}
    \item Given a cocharacter datum $(\GG, \mu)$, the Griffiths ray $\ogrif(\GG, \mu,r)$ is independent of $r$ (always assumed with central kernel).
\item
\label{item-cor-proportionality-scalar}
Given $r \in \Rep_F(\GG)$ with central kernel, the positive scalar $c \in \QQ>0$ such that $s^*\grif(\GG,\mu,r)=-c\mu^{\ad}$ is independent of $\mu$.
\end{enumerate}

\end{corollary}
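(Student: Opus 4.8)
The plan is to deduce both assertions formally from Theorem~\ref{th-main}; no new computation is needed once that theorem is in hand. The whole point is to isolate, in the explicit formulas \eqref{eq-explicit-griffiths} and part~\ref{item-ray-equality}, the factor that is free of $r$ (for the first assertion) and the factor that is free of $\mu$ (for the second).

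For the first assertion I would invoke part~\ref{item-ray-equality} of Theorem~\ref{th-main} directly: it gives
\[
\ogrif(\GG, \mu, r) = -\langle \mu^{\ad} \rangle \quad \textnormal{ in } X^*(\tilde\TT)_{\QQ},
\]
and the right-hand side depends only on the cocharacter datum $(\GG,\mu)$ through $\mu^{\ad}$; the representation $r$ has disappeared entirely. Hence the Griffiths ray is independent of $r$ (among those with central kernel), which is exactly the claim. The only tacit hypothesis is that $\mu^{\ad}\neq 0$, so that $-\langle\mu^{\ad}\rangle$ is a genuine ray; this holds whenever $\mu$ is nontrivial modulo the center, which is the only case of interest.

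For the second assertion I would read the scalar off \eqref{eq-explicit-griffiths}: part~\ref{item-griffiths-explicit} asserts that, under the identification $X^*(\tilde{\TT})_{\QQ} \cong X_*(\tilde{\TT})_{\QQ}$ afforded by $(,)$, one has $s^*\grif(\GG,\mu,r) = -\tfrac{(\alpha,\alpha)}{4} S(\alpha^{\vee},r)\, \mu^{\ad}$ for every $\alpha \in \Phi$. Thus the required scalar is $c = \tfrac{(\alpha,\alpha)}{4} S(\alpha^{\vee},r)$, and three points must be noted, all already supplied by the theorem. First, $c$ is well-defined, i.e. independent of the auxiliary $\alpha \in \Phi$ chosen to express it: this is precisely part~\ref{item-indep-alpha}, the constancy of $(\alpha,\alpha)S(\alpha^{\vee},r)$ over $\Phi$. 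Second, $c$ is manifestly independent of $\mu$, since neither $(\alpha,\alpha)$ (a datum of the root system and the form $(,)$) nor $S(\alpha^{\vee},r)$ (a sum over the $\TT$-weights of $r$, see \eqref{eq-weight-pairing-sum}) involves $\mu$. Third, $c>0$: the form $(,)$ is positive definite, so $(\alpha,\alpha)>0$, while $S(\alpha^{\vee},r)>0$ because the central-kernel hypothesis forces $r$ to be nontrivial on each simple factor, so some weight $\chi \in \Phi(V,\TT)$ pairs nontrivially with $\alpha^{\vee}$.

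The one point meriting a word of care is that the scalar $c$ is tied to a fixed choice of the form $(,)$: rescaling $(,)$ by $\lambda>0$ rescales $(\alpha,\alpha)$, hence $c$, by $\lambda$, while simultaneously rescaling the identification $X_*(\tilde\TT)_{\QQ}\cong X^*(\tilde\TT)_{\QQ}$ so that the product $-c\,\mu^{\ad}$ is unchanged --- this is the content of the first remark following Theorem~\ref{th-main}. Since the standing convention of \S\ref{sec-main-result} fixes $(,)$ once and for all, $c$ is a well-defined positive rational, and \ref{item-cor-proportionality-scalar} asserts exactly its independence of $\mu$. I therefore expect no substantive obstacle: the full force of the corollary is already contained in Theorem~\ref{th-main}, and the argument is purely a matter of inspecting which variables occur in the closed-form expressions of parts~\ref{item-griffiths-explicit} and~\ref{item-ray-equality}.
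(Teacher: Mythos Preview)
Your proposal is correct and matches the paper's own treatment: the paper simply states that Corollary~\ref{cor-independence} is an ``immediate corollary'' of Theorem~\ref{th-main} and gives no separate proof. Your reading of part~\ref{item-ray-equality} for (a) and of \eqref{eq-explicit-griffiths} together with part~\ref{item-indep-alpha} for (b), including the justification that $c=\tfrac{(\alpha,\alpha)}{4}S(\alpha^{\vee},r)>0$ via Lemma~\ref{lem-weights-not-all-root-hyperplane}, is exactly the intended unpacking.
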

\begin{corollary}
\label{cor-main-simply-laced}
In addition to the hypotheses of \textnormal{Theorem~\ref{th-main}}, assume that all roots $\alpha$ satisfying $\langle \alpha, \mu \rangle \neq 0$ have the same length\footnote{In particular this holds if $\GG$ is simply-laced.}. Then, without reference to $(,)$, one has
\begin{equation}
\label{eq-ogrif-simply-laced}
    \ogrif(\GG, \mu, r)=-\langle \sum_{\alpha \in \Delta} \langle \alpha, \mu \rangle \eta(\alpha) \rangle.
\end{equation}
\end{corollary}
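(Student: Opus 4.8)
The plan is to deduce the corollary directly from Theorem~\ref{th-main}\ref{item-grif-fund-weights} and \ref{item-indep-alpha} by expanding $s^*\grif(\GG, \mu, r)$ in the basis of fundamental weights. Since $\tilde\GG$ is simply connected, the simple coroots $\{\alpha^{\vee}\}_{\alpha \in \Delta}$ form a basis of $X_*(\tilde\TT)_{\QQ}$, so the fundamental weights $\{\eta(\alpha)\}_{\alpha \in \Delta}$ are the dual basis of $X^*(\tilde\TT)_{\QQ}$ (\S\ref{sec-fund-weights}); this is precisely why one pulls back along $s$. Hence every $\lambda \in X^*(\tilde\TT)_{\QQ}$ satisfies $\lambda = \sum_{\alpha \in \Delta}\langle \lambda, \alpha^{\vee}\rangle\,\eta(\alpha)$, and I would apply this to $\lambda = s^*\grif(\GG, \mu, r)$.

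First I would observe that pulling back along $s$ leaves the relevant pairings unchanged, i.e.\ $\langle s^*\grif(\GG, \mu, r), \alpha^{\vee}\rangle = \langle \grif(\GG, \mu, r), \alpha^{\vee}\rangle$ for $\alpha \in \Delta$, since $s:\tilde\GG \to \GG$ identifies the simple coroots of $(\tilde\GG, \tilde\TT)$ with those of $(\GG, \TT)$ (\S\ref{sec-Gad-tildG-root-data}). Theorem~\ref{th-main}\ref{item-grif-fund-weights} then computes each coefficient as $-\frac12\langle \alpha, \mu\rangle S(\alpha^{\vee}, r)$, giving
\[
s^*\grif(\GG, \mu, r) = -\frac12\sum_{\alpha \in \Delta}\langle \alpha, \mu\rangle\, S(\alpha^{\vee}, r)\,\eta(\alpha).
\]

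The crux is to factor out a single positive scalar, and this is exactly where the same-length hypothesis enters. The simple roots contributing a nonzero summand are precisely those with $\langle \alpha, \mu\rangle \neq 0$; by assumption these share a common length $(\alpha, \alpha) = \ell$. By Theorem~\ref{th-main}\ref{item-indep-alpha} the product $(\alpha, \alpha)S(\alpha^{\vee}, r) =: C$ is independent of $\alpha$, so $S(\alpha^{\vee}, r) = C/\ell$ is constant across all contributing roots, while the remaining summands vanish. Therefore
\[
s^*\grif(\GG, \mu, r) = -\frac{C}{2\ell}\sum_{\alpha \in \Delta}\langle \alpha, \mu\rangle\,\eta(\alpha).
\]

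Finally I would check the sign. One has $C = (\alpha, \alpha)S(\alpha^{\vee}, r) > 0$, since $(\alpha, \alpha) > 0$ by positive-definiteness of $(,)$ and $S(\alpha^{\vee}, r) > 0$ because $r$ has central kernel and $\GG^{\ad}$ is $F$-simple (as already established en route to Theorem~\ref{th-main}); as $\ell > 0$ too, the scalar $C/(2\ell)$ is positive. Passing to the positive ray therefore absorbs this scalar and retains only the overall sign, yielding $\ogrif(\GG, \mu, r) = -\langle \sum_{\alpha \in \Delta}\langle \alpha, \mu\rangle\eta(\alpha)\rangle$, which visibly makes no reference to $(,)$. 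The only genuinely delicate point is the positivity of $C/\ell$ that pins down the sign of the ray; everything else is bookkeeping, the same-length hypothesis being exactly what lets the per-root coefficients $S(\alpha^{\vee}, r)$ be pulled out as a single constant.
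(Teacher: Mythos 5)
Your proposal is correct and follows essentially the same route as the paper, which proves the corollary by combining the pairing formula of Lemma~\ref{lem-compute-grif-alpha} (= Theorem~\ref{th-main}\ref{item-grif-fund-weights}), the constancy of $S(\alpha^{\vee},r)$ on roots of equal length (Lemma~\ref{lem-sum-same-length}, which is what your appeal to \ref{item-indep-alpha} reduces to under the same-length hypothesis), and the positivity/sign analysis of Corollary~\ref{cor-anti-ample}. Your write-up merely makes the bookkeeping in the fundamental-weight basis explicit.
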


\begin{rmk} The assumption that $\GG$ is $F$-simple and the need to consider associated rays (\ie to allow positive scalar multiples) are both already essential in the setting of the Hodge line bundle, see \cite[\S4.5]{Goldring-Koskivirta-quasi-constant} and \cite[\S2.1.6, Footnote 7]{Goldring-Koskivirta-Strata-Hasse} for respective examples.

\end{rmk}
As an application of our joint work with Brunebarbe, Koskivirta and Stroh on positivity of automorphic bundles \cite{Brunebarbe-Goldring-Koskivirta-Stroh-ampleness}, one obtains the nefness of the Griffiths bundle on a proper $\GGZip^{\mu}$-scheme.
\begin{corollary}[Nefness]
\label{cor-nef} Assume $F$ has characteristic $p$, that $X$ is a proper $\Fbar$-scheme of finite type and that $\zeta:X \to \GGZip^{\mu}$ is a morphism. If $\grif(\GG, \mu, r)$ is orbitally $p$-close \textnormal{(\S\ref{sec-cond-characters})}, then the pullback of the Griffiths line bundle to $X$ is a nef line bundle on $X$.
\end{corollary}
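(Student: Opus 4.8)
The plan is to deduce nefness by combining Theorem~\ref{th-main} with the positivity theory of automorphic bundles on zip-schemes from \cite{Brunebarbe-Goldring-Koskivirta-Stroh-ampleness}, using our explicit description of the Griffiths character only to locate it inside the relevant nef cone. Write $\Vscr(\lambda)$ for the line bundle on $\GGZip^{\mu}$ associated to a character $\lambda \in X^*(\LL)$ via \S\ref{sec-associated-sheaves}, so that $\grif(\GGZip^{\mu}, r) = \Vscr(\grif(\GG,\mu,r))$. The input I would take from \loccit{} is a cone $C \subset X^*(\LL)_{\QQ}$ -- the characters $\lambda$ for which $\Vscr(\lambda)$ pulls back to a nef line bundle on \emph{every} proper $\GGZip^{\mu}$-scheme $(X,\zeta)$ -- together with the assertion that $C$ contains every orbitally $p$-close character $\lambda$ satisfying the anti-dominance condition $\langle \lambda, \alpha^{\vee}\rangle \le 0$ for all $\alpha \in \Delta \setminus \Delta_{\LL}$. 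Granting this, it suffices to verify that $\lambda = \grif(\GG, \mu, r)$ meets both hypotheses, whereupon membership in $C$ yields nefness of $\zeta^*\grif(\GGZip^{\mu},r)$ on the given proper $(X,\zeta)$ directly.

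The orbitally $p$-close hypothesis is built into the statement of Corollary~\ref{cor-nef}, so no work is needed there. For the anti-dominance condition I would invoke Theorem~\ref{th-main}\ref{item-grif-fund-weights}, which gives the pairing explicitly:
\begin{equation*}
\langle \grif(\GG, \mu, r), \alpha^{\vee}\rangle = -\tfrac{1}{2}\langle \alpha, \mu\rangle\, S(\alpha^{\vee}, r).
\end{equation*}
Here $S(\alpha^{\vee}, r) = \sum_{\chi} m_V(\chi)\langle\chi,\alpha^{\vee}\rangle^2 \ge 0$ is a multiplicity-weighted sum of squares, and $\langle\alpha,\mu\rangle \ge 0$ because $\Delta$ is chosen with $\mu$ $\Delta$-dominant (\S\ref{sec-cocharacter-data}). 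Hence the pairing is $\le 0$ for every $\alpha \in \Delta$, and it vanishes precisely for $\alpha \in \Delta_{\LL}$ (where $\langle\alpha,\mu\rangle = 0$); in fact, since $r$ has central kernel one has $S(\alpha^{\vee},r) > 0$, so the inequality is strict on $\Delta \setminus \Delta_{\LL}$. This is exactly the required anti-dominance, and it matches the conceptual picture of Theorem~\ref{th-main}\ref{item-ray-equality}, namely that up to a positive multiple $\grif(\GG,\mu,r)$ equals $-\mu^{\ad}$, which is anti-dominant because $\mu$ is dominant (Remark~\ref{rmk-sign-change}). As nefness depends only on the numerical class, hence only on the ray of the pulled-back bundle, this passage to the ray is harmless.

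The substance of the argument -- and the main obstacle -- lies entirely on the side of \cite{Brunebarbe-Goldring-Koskivirta-Stroh-ampleness}: one must establish that anti-dominant, orbitally $p$-close characters of $\LL$ genuinely produce nef line bundles on all proper $\GGZip^{\mu}$-schemes, and that the sign and cone conventions there agree with those fixed in \S\ref{sec-def-GZip-schemes} -- in particular the interplay of $P$, its opposite $P^{\opp}$, and the Frobenius twist $Q = (P^{\opp})^{(p)}$. The orbitally $p$-close condition is precisely what bounds the discrepancy between $\lambda$ and its $W \rtimes \galf$-orbit created by the Frobenius, and so enters as the exact hypothesis converting Hasse-type positivity on the stack into honest nefness after pullback; verifying that its formulation in \loccit{} coincides with \S\ref{sec-cond-characters} is the one point I expect to require genuine care rather than the routine cone check above.
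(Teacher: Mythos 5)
Your proposal is correct and follows essentially the same route as the paper: the paper likewise reduces the corollary to (i) the orbitally $p$-close hypothesis and (ii) the $\Delta\setminus\Delta_{\LL}$-negativity of $\grif(\GG,\mu,r)$ supplied by Theorem~\ref{th-main}\ref{item-grif-fund-weights} via Corollary~\ref{cor-anti-ample}, and then invokes \cite{Brunebarbe-Goldring-Koskivirta-Stroh-ampleness} for the positivity. The only cosmetic difference is that the paper packages the external input concretely through the notion of a \emph{Hasse generator} (citing \cite{Goldring-Koskivirta-Strata-Hasse}, Thm.~3.2.3) and sketches why such a generator pulls back to a nef bundle --- on each closed integral subscheme $Z\subset X$ one gets a nonzero section of a power of the bundle from the unique zip stratum meeting $Z$ densely --- whereas you treat the same input as an abstract nef cone of anti-dominant, orbitally $p$-close characters.
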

\begin{rmk}
\label{rmk-compare-BGKS}
We emphasize that \cite{Brunebarbe-Goldring-Koskivirta-Stroh-ampleness} contains stronger positivity results and that our sole contribution here is to show that $\grif(\GG, \mu, r)$ is $\Delta \setminus \Delta_{\LL}$-negative, see Corollary~\ref{cor-anti-ample}.
\end{rmk}
\begin{rmk}
\label{rmk-nef-open-base}
Unlike the property "ampleness", the property "nef" is not always open on the base, \cf \cite{Moriwaki-criterion-openness-family-nef}. So we do not know if one can reprove Griffiths' result that the Griffiths line bundle of a polarized $\QQ$-VHS over a projective base is nef via Corollary~\ref{cor-nef}.
\end{rmk}
\subsection{Examples}
\label{sec-examples}

\subsubsection{The Hodge character and line bundle I: $F=\QQ$}
\label{sec-hodge-line-bundle}
Consider the special case of triples $(\GG, \mu,r)$  where
\begin{enumerate}
    \item $F=\QQ$ (\ie $\GG$ is a $\QQ$-group),
    \item $\mu(z)=(h \otimes \CC)(z,1)$ for some $h:\SS \to \GG_{\RR}$ and $(\GG, \XX)$ is a Shimura datum, where $\XX=\class_{\GG(\RR)}(h)$,
    \item $r\circ h$ is of type $\{(0,-1),(-1,0)\}$.  \end{enumerate}
Then the Shimura datum $(\GG, \XX)$ is of Hodge type and $r$ is a symplectic embedding \cite[Lemma 1.3.3]{Deligne-Shimura-varieties}.  In this case, the Griffiths character $\grif(\GG, \mu, r)$ is the \underline{Hodge character} giving rise to the Hodge line bundle of the Shimura variety $\Sh(\GG, \XX)$. Since $\mu$ is minuscule in this example, the condition of Corollary~\ref{cor-main-simply-laced} holds.  Then Corollary~\ref{cor-main-simply-laced} recovers Theorem~1.4.4 and Corollary~1.4.5 of our joint work with Koskivirta \cite{Goldring-Koskivirta-quasi-constant}, which state that the Hodge character is quasi-constant (\S\ref{sec-cond-characters})  and that the Hodge ray it determines is independent of $r$ and given by~\eqref{eq-ogrif-simply-laced}.

For applications of these results to the "tautological" ring of Hodge-type Shimura varieties and the cycle classes of Ekedahl-Oort strata, see the recent preprint of Wedhorn-Ziegler  \cite{Wedhorn-Ziegler-tautological}.
\subsubsection{The Hodge character and line bundle II: $\GG$-Zips}
\label{sec-hodge-line-bundle-GZip}
Let $F=\fp$. Let $(V,\psi)$ be a symplectic space over $\fp$ of dimension $g$ and $GSp(V,\psi)$ the corresponding symplectic similitude group. Let $\mu_g$ be a non-central, minuscule cocharacter of $GSp(V,\psi)$.  The Hodge character is defined for any symplectic embedding of cocharacter data
\begin{equation}
\label{eq-symplectic-embedding-mod-p}
(\GG, \mu) \hookrightarrow (GSp(2g), \mu_g)
\end{equation}
and the associated line bundle is the Hodge line bundle of $\GGZip^{\mu}$, \cite[\S1.3]{Goldring-Koskivirta-Strata-Hasse}. Theorem~\ref{th-main} extends the results of \cite{Goldring-Koskivirta-quasi-constant}, recalled in \S\ref{sec-hodge-line-bundle}, to symplectic embeddings~\ref{eq-symplectic-embedding-mod-p} which need not arise from an embedding of Shimura data. In particular, the Hodge character is quasi-constant even if it does not arise from a Shimura variety setting by reduction mod $p$.
\subsubsection{The adjoint representation via the Coxeter number}
\label{sec-example-adjoint}
When $r=\Ad$ is the adjoint representation, the sums $S(\alpha^{\vee},r)$ (\S\ref{sec-weight-pairing-sums}) are sums of squares of root pairings, and their values are computed explicitly using the computations by Bourbaki of the "canonical bilinear form" and $\gamma$-invariant of a root system (\cite[\Chap~6,~\S1.12]{bourbaki-lie-4-6} and exercise 5 of \Chap~6, \S1 in \loccitn; see also Remark~\ref{rmk-app-canonical-bilinear-form}). In this way, given the root system $(X^*(\tilde \TT)_{\QQ}, \Phi, (,))$ associated to $(\GG, \TT)$ (\S\ref{sec-root-system}), one obtains the proportionality constant $c$ of Corollary~\ref{cor-independence}\ref{item-cor-proportionality-scalar} for $r=\Ad$.

When $\Delta$ is irreducible and simply-laced, one has $S(\alpha^{\vee}, \Ad)=4h$ for all $\alpha \in \Phi$, where $h$ is the Coxeter number of $\Delta$. When $\Delta$ is irreducible and multi-laced one has two invariants of the root system:
$S(\alpha^{\vee}, r)$ for $\alpha$ short and long respectively.
These are recorded in Table~\ref{table-root-pairing-sums}, together with the Coxeter number and $\gamma(\Delta)$;
the latter are taken from the Planches in \loccit When $\Delta$ is simply-laced, one has $h^2=\gamma(\Delta)$.

\begin{table}[ht] \label{table-root-pairing-sums}
        \caption{The root pairing sums $S(\alpha^{\vee}, \Ad)$}

{\renewcommand{\arraystretch}{1.25}
\begin{tabular}{|c|c|c|c|c|c|}
\hline
 Type of $\Delta$
 &
\begin{tabular}{l}
Coxeter  \\
 number $h$
 \end{tabular}
&
 $\gamma(\Delta)$
&
\begin{tabular}{c}
$S(\alpha^{\vee}, \Ad)$ \\
$\Delta$ simply-laced
\end{tabular}
&
\begin{tabular}{c}
$S(\alpha^{\vee}, \Ad)$ \\
$\alpha^{\vee}$ short
\end{tabular}
&
\begin{tabular}{c}
$S(\alpha^{\vee}, \Ad)$ \\
$\alpha^{\vee}$ long
\end{tabular}
\\ \hline
 $A_{n-1}$ & $n$ & $n^2$ & $4n$ & &
\\ \hline
$B_n$ & $2n$ & $(n+1)(4n-2) $ & & $4(2n-1)$ & $8(2n-1)$

\\  \hline
$C_n$ &  $2n$ & $(n+1)(4n-2) $ &  & $4(n+1)$ & $8(n+1)$
\\ \hline
 $D_n$ &  $2(n-1)$ & $4(n-1)^2$ & $8(n-1)$ & &
\\ \hline
$G_2$ &  $6$ & $48$ & & $16$ & $48$
\\ \hline
$F_4$ &   $12$ & $162$ & & $36$ & $72$
\\ \hline
 $E_6$  & $12$ & $144$ &  $48$ & &
\\ \hline
 $E_7$ &  $18$ & $324$ & $72$ & &
\\ \hline
 $E_8$ &  $30$ & $900$ & 120 & &
\\ \hline
\end{tabular}}
\end{table}

\section{Proof of the main result and its application}
\label{sec-proof}
\subsection{General lemmas on roots and weights}
\label{sec-general-root-lemmas}
Let $k$ be an algebraically closed field.
\begin{lemma}
\label{lem-torus-rep}
Let $T$ be a $k$-torus. Assume  $$ \varphi: T \to GL(V)$$ is a finite-dimensional representation with finite kernel. Then the $\QQ$-span of the $T$-weights of $\varphi$ is all of $X^*(T)_{\QQ}$.
\end{lemma}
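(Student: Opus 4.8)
The plan is to combine the weight-space decomposition of a torus representation with the perfect pairing between characters and cocharacters, arguing by contradiction. Since $T$ is a torus over the algebraically closed field $k$, it is diagonalizable, so $V$ decomposes as $V=\bigoplus_{\chi}V_{\chi}$, the sum running over the finite set of weights $\chi \in X^*(T)$ with $V_{\chi}\neq 0$, and $\varphi(t)$ acts on $V_{\chi}$ by the scalar $\chi(t)$. Consequently $\ker\varphi=\bigcap_{\chi}\ker\chi$, the intersection taken over all weights. Writing $M\subseteq X^*(T)$ for the subgroup generated by the weights, the goal is to prove $M_{\QQ}=X^*(T)_{\QQ}$.

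First I would suppose, for contradiction, that $M_{\QQ}\subsetneq X^*(T)_{\QQ}$. Under the perfect pairing $\langle,\rangle\colon X^*(T)_{\QQ}\times X_*(T)_{\QQ}\to\QQ$, the annihilator of the proper subspace $M_{\QQ}$ is then a nonzero subspace of $X_*(T)_{\QQ}$; choosing a nonzero element and clearing denominators yields a nonzero cocharacter $\lambda\in X_*(T)$ satisfying $\langle\chi,\lambda\rangle=0$ for every weight $\chi$ (equivalently, for every element of $M$).

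Next I would feed $\lambda$ back into $\varphi$. For $t\in\gm(k)$ and $v\in V_{\chi}$, one has $\varphi(\lambda(t))v=\chi(\lambda(t))v=t^{\langle\chi,\lambda\rangle}v=v$, so $\varphi(\lambda(t))$ is the identity on each weight space, hence on all of $V$; thus $\lambda(\gm)\subseteq\ker\varphi$. Since $\lambda$ is a nonzero cocharacter, its image is a one-dimensional subtorus of $T$ (the kernel of $\lambda$ is the finite group of roots of unity cut out by the gcd of its coordinates). This places a positive-dimensional subgroup inside $\ker\varphi$, contradicting the hypothesis that $\ker\varphi$ is finite, and the lemma follows.

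No serious obstacle arises: the only substantive input is the duality converting a rank deficiency of the weight lattice $M$ into a nontrivial cocharacter landing in $\ker\varphi$, which is exactly what the finiteness hypothesis forbids. A contradiction-free phrasing would instead note that the homomorphism $T\to\gm^{N}$ (with $N$ the number of weights) given by $t\mapsto(\chi(t))_{\chi}$ has kernel $\bigcap_{\chi}\ker\chi=\ker\varphi$ and pulls the standard characters back to the weights; finiteness of the kernel forces this map to be an isogeny onto its image, so $M$ has rank $\dim T$ and hence spans $X^*(T)_{\QQ}$. Either way the argument is elementary and self-contained.
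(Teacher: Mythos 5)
Your proof is correct, but it takes a genuinely different route from the paper's. The paper first passes to the quotient torus $T'=T/\ker\varphi$ (using that $X^*(T)_{\QQ}=X^*(T')_{\QQ}$ because the kernel is finite) to reduce to the faithful case, and then invokes the Tannakian fact that a faithful representation tensor-generates: every $\chi\in X^*(T')$ occurs as a weight of some $V^{\otimes m}\otimes (V^{\vee})^{\otimes n}$, whose weights are $\ZZ$-linear combinations of the weights of $V$, so every character lies in the $\QQ$-span of $\Phi(V,T)$. You instead run the duality the other way: a rank deficiency of the weight lattice $M$ produces, via the perfect pairing $X^*(T)\times X_*(T)\to\ZZ$, a nonzero cocharacter $\lambda$ annihilating all weights, whence $\lambda(\gm)\subseteq\ker\varphi=\bigcap_{\chi}\ker\chi$, contradicting finiteness of the kernel (your closing isogeny rephrasing via $T\to\gm^N$ is the same argument without contradiction). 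Your version is more elementary and self-contained, relying only on the weight-space decomposition and the character--cocharacter pairing; the paper's version fits its Tannakian framing and leans on a general principle (faithful representations generate the representation category under tensor operations) rather than on the explicit duality. Both are complete proofs of the lemma.
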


\begin{proof}
Let $T':=T/ \ker \varphi$. Then $T'$ is a torus,
\begin{comment}
follows from the anti-equivalence of categories between  diagonalizable groups and $\ZZ$-modules: the surjection $T \to T/ker \varphi$ corresponds to an injection of $\ZZ$-modules $X^*(T') \to X^*(T) $ and a submodule of a free $\ZZ$-module is again free.
\end{comment}
and the induced map
$\overline{\varphi}:T' \to GL(V)$ is faithful. Moreover,  $X^*(T)_{\QQ}=X^*(T')_{\QQ}$ since $\ker \varphi$ is finite. Thus we reduce to the case that $\varphi$ is faithful.

Since $T$ is a torus, the category $\Rep_k(T)$ is semisimple and Tannakian, neutralized by the forgetful functor
$\Rep_k(T) \to \Vec_k$. Since $\varphi$ is faithful, every
$\chi \in X^*(T)$ is a factor of some $T^{m,n}(V):=V^{\otimes m} \otimes (V^{\vee})^{\otimes n}$.
In other words, $\chi$ is a  $T$-weight of some $T^{m,n}(V)$.
The $T$-weights of $T^{m,n}(V)$ are $\ZZ$-linear combinations of the $T$-weights of $V$.
So $\chi$ lies in the $\QQ$-span of the $T$-weights of $V$.
\end{proof}
Recall the notation $\Phi(V,T)$ for the $T$-weights of $V$ (\S\ref{sec-weights}).
\begin{lemma}
\label{lem-weights-not-all-root-hyperplane}
Assume that $r:G \to GL(V)$ is a representation with central kernel. Then, for every simple root $\alpha \in \Delta$, there exists a weight $\chi \in \Phi(V, T)$ such that $\langle \chi, \alpha^{\vee} \rangle \neq 0$.
\end{lemma}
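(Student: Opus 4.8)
The plan is to restrict the representation $r$ to the one-dimensional subtorus cut out by the coroot $\alpha^{\vee}$ and then apply Lemma~\ref{lem-torus-rep}. Concretely, fix $\alpha \in \Delta$ and set $T_{\alpha}:=\alpha^{\vee}(\gm) \subseteq T$, the image of the coroot, a subtorus of rank one. The $T_{\alpha}$-weights of the restriction $r|_{T_{\alpha}}$ are precisely the integers $\langle \chi, \alpha^{\vee} \rangle$ as $\chi$ ranges over $\Phi(V,T)$ (with multiplicity $m_V(\chi)$), since $r \circ \alpha^{\vee}$ acts on the weight space $V_{\chi}$ by $z \mapsto z^{\langle \chi, \alpha^{\vee} \rangle}$. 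Hence the assertion is equivalent to the statement that $r|_{T_{\alpha}}$ has at least one nonzero weight, i.e. is nontrivial on $T_{\alpha}$.

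First I would check that $r|_{T_{\alpha}}$ has finite kernel, which is exactly the hypothesis needed to invoke Lemma~\ref{lem-torus-rep}. Because $\langle \alpha, \alpha^{\vee} \rangle = 2 \neq 0$, the root $\alpha$ restricts to the nontrivial character $z \mapsto z^2$ of $T_{\alpha}$; since the center $Z(G)$ is contained in $\ker \alpha$ (for a connected reductive group $Z(G)=\bigcap_{\beta \in \Phi}\ker \beta$), this shows $T_{\alpha} \not\subseteq Z(G)$. By the central kernel hypothesis $\ker r \subseteq Z(G)$, so $\ker(r|_{T_{\alpha}})=T_{\alpha} \cap \ker r \subseteq T_{\alpha} \cap Z(G)$ is a proper closed subgroup of the one-dimensional torus $T_{\alpha}$, hence finite. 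Applying Lemma~\ref{lem-torus-rep} to $\varphi=r|_{T_{\alpha}}: T_{\alpha} \to GL(V)$ then yields that the $\QQ$-span of the $T_{\alpha}$-weights is all of $X^*(T_{\alpha})_{\QQ}$, which is one-dimensional and nonzero. In particular the $T_{\alpha}$-weights cannot all vanish, so there exists $\chi \in \Phi(V,T)$ with $\langle \chi, \alpha^{\vee} \rangle \neq 0$, as desired.

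I do not expect a serious obstacle here: the only point requiring care is the identification of $\ker(r|_{T_{\alpha}})$ as finite, which is precisely where both hypotheses are used, namely that $\alpha^{\vee}$ is a genuine coroot (so $\langle \alpha, \alpha^{\vee} \rangle = 2$) and that $\ker r$ is central. In fact one can bypass Lemma~\ref{lem-torus-rep} altogether by a direct contradiction: if every weight $\chi$ paired to $0$ with $\alpha^{\vee}$, then $r \circ \alpha^{\vee}$ would be the trivial cocharacter, forcing $T_{\alpha} \subseteq \ker r \subseteq Z(G)$ and hence $\langle \alpha, \alpha^{\vee} \rangle = 0$, contradicting $\langle \alpha, \alpha^{\vee} \rangle = 2$. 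I would present the argument via Lemma~\ref{lem-torus-rep} since it reuses the preceding lemma, but either formulation is short.
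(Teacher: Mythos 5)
Your proof is correct, and it takes a genuinely different route from the paper's. The paper first reduces to the semisimple case by restricting characters to $T^{\der}$ (using that the pairing with $\alpha^{\vee}$ is unchanged under this restriction), observes that $r$ then has finite kernel on all of $T$, applies Lemma~\ref{lem-torus-rep} to the \emph{full} maximal torus to conclude that the weights span $X^*(T)_{\QQ}$, and finishes by noting that if all weights paired to zero with $\alpha^{\vee}$ their span would lie in a root hyperplane. You instead localize at the single coroot: you restrict $r$ to the one-dimensional subtorus $T_{\alpha}=\alpha^{\vee}(\gm)$, verify finiteness of the kernel there directly from $\langle \alpha,\alpha^{\vee}\rangle=2$ together with $Z(G)\subseteq\ker\alpha$ and $\ker r\subseteq Z(G)$, and then only need the (trivial) rank-one case of Lemma~\ref{lem-torus-rep} --- or, as you note, no lemma at all, since a trivial restriction $r\circ\alpha^{\vee}$ would force $T_{\alpha}\subseteq Z(G)$ outright. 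Both arguments hinge on the same two facts (the center is killed by every root, and a coroot pairs to $2$ with its root), but yours avoids the passage to $G^{\der}$ and the root-hyperplane step, at the cost of proving only the statement at hand rather than the stronger fact, implicit in the paper's argument, that the weights span all of $X^*(T^{\der})_{\QQ}$. One small point of care, which you handle correctly: $\alpha^{\vee}:\gm\to T$ need not be injective, so the identification of the $T_{\alpha}$-weights with the integers $\langle\chi,\alpha^{\vee}\rangle$ is via the injection $X^*(T_{\alpha})\hookrightarrow X^*(\gm)=\ZZ$; all that matters is that $\chi|_{T_{\alpha}}$ is trivial if and only if $\langle\chi,\alpha^{\vee}\rangle=0$.
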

\begin{proof}
Let $\iota: T^{\der} \to T$ be the inclusion. If $\chi$ is a $T$-weight of $V$, then $\langle \chi, \alpha^{\vee} \rangle=\langle \iota^* \chi, \alpha^{\vee} \rangle$, where we identify the coroots of $T^{\der}$ in $G^{\der}$ with those of $T$ in $G$ as in \S\ref{sec-Gder-root-datum}. Thus we reduce to the case that $G=G^{\der}$ is semisimple. Then $r$ has finite kernel, hence so does its restriction to $T \subset G$. Now the result follows from Lemma~\ref{lem-torus-rep}, for otherwise the $\QQ$-span of the weights would lie in a root hyperplane.
\end{proof}
\begin{corollary} 
\label{cor-weights-not-all-root-hyperplane}
Assume that $r:G \to GL(V)$ is a representation with central kernel and that $\mu \in X_*(T)$ is not central in $G$. Then there exists a weight $\chi \in \Phi(V,T)$ such that $\langle \chi, \mu \rangle \neq 0$.
\end{corollary}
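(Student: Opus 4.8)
The plan is to prove the contrapositive: assuming $\langle \chi, \mu \rangle = 0$ for every weight $\chi \in \Phi(V,T)$, I will show that $\mu$ must be central in $G$, contradicting the hypothesis. The advantage of this formulation is that it converts the (a priori awkward) existential conclusion into a clean statement about the one-parameter subgroup $\mu(\gm)$.

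The key step is to translate the pairing condition into a statement about an action. Decompose $V=\bigoplus_{\chi \in \Phi(V,T)} V_{\chi}$ into $T$-weight spaces, which is legitimate since $T$ is a torus and hence diagonalizable. For $z \in \gm$ and $v \in V_{\chi}$, the element $r(\mu(z))$ acts on $v$ by the scalar $\chi(\mu(z))=z^{\langle \chi, \mu \rangle}$. Under the standing assumption $\langle \chi, \mu \rangle = 0$ for all $\chi$, this scalar equals $1$ on every weight space, so $r \circ \mu$ is the trivial homomorphism $\gm \to GL(V)$; equivalently $\mu(\gm) \subseteq \ker r$. Now I invoke the central-kernel hypothesis on $r$: since $\ker r \subseteq Z(G)$, it follows that $\mu(\gm) \subseteq Z(G)$, which is exactly the statement that $\mu$ is central. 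This contradicts the assumption that $\mu$ is non-central and finishes the argument.

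I do not expect a genuine obstacle here; morally this is the dual of the principle underlying Lemma~\ref{lem-weights-not-all-root-hyperplane}, read off directly from the weight-space decomposition rather than from spanning properties of the weights. The only points needing minor care are the (standard) equivalence between a cocharacter pairing trivially with all weights and its image acting trivially on $V$, and the precise reading of ``central'' as ``image contained in $Z(G)$.'' For completeness I note an alternative route that reduces to the semisimple case by restricting $r$ to the maximal torus $T^{\der} \subseteq G^{\der}$, where the kernel becomes finite, and then applying Lemma~\ref{lem-torus-rep} to conclude that the restricted weights span $X^*(T^{\der})_{\QQ}$; however, that approach must separately control the contribution of the central component of $\mu$ to the pairing $\langle \chi, \mu \rangle$, a complication the direct argument above avoids entirely, so I would prefer the direct argument.
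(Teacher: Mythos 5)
Your proof is correct, but it takes a genuinely different route from the paper's. The paper argues positively: non-centrality of $\mu$ gives a simple root $\alpha$ with $\langle \alpha, \mu \rangle \neq 0$; Lemma~\ref{lem-weights-not-all-root-hyperplane} gives a weight $\chi$ with $\langle \chi, \alpha^{\vee} \rangle \neq 0$; and the reflection identity $\langle s_{\alpha}\chi, \mu \rangle = \langle \chi, \mu \rangle - \langle \chi, \alpha^{\vee} \rangle \langle \alpha, \mu \rangle$ together with $W$-stability of $\Phi(V,T)$ forces one of $\chi$, $s_{\alpha}\chi$ to pair nontrivially with $\mu$. You instead prove the contrapositive directly: if every weight kills $\mu$, then $r \circ \mu$ acts trivially on each weight space of $V_{\Fbar}$, hence is the trivial cocharacter of $GL(V)$, so $\mu(\gm) \subseteq \ker r \subseteq Z(G)$ and $\mu$ is central. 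This is sound (the identification of ``$\mu$ central'' with ``$\mu(\gm) \subseteq Z(G)$'' agrees with the paper's usage, where centrality is equivalent to $\langle \alpha, \mu \rangle = 0$ for all roots), and it is both shorter and more elementary: it uses neither Lemma~\ref{lem-torus-rep} nor the Weyl group, and it would apply verbatim with $T$ replaced by any torus mapping to $G$. What the paper's route buys is economy within its own architecture — Lemma~\ref{lem-weights-not-all-root-hyperplane} is needed independently for Corollary~\ref{cor-anti-ample}, so the corollary there comes almost for free — whereas your argument exposes that the statement is really just the definition of ``central kernel'' read through the weight-space decomposition.
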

\begin{proof}
Since $\mu$ is not central, there exists $\alpha \in \Delta$ such that $\langle \alpha, \mu \rangle \neq 0$. By Lemma~\ref{lem-weights-not-all-root-hyperplane} applied to $\alpha$, there exists $\chi \in \Phi(V,T)$ such that $\langle \chi, \alpha^{\vee} \rangle \neq 0$.

Since $\langle s_{\alpha}\chi, \mu \rangle =\langle \chi, \mu \rangle -\langle \chi, \alpha^{\vee} \rangle \langle \alpha,\mu \rangle$, we conclude that either $\langle \chi, \mu \rangle \neq 0$ or $\langle s_{\alpha}\chi, \mu \rangle \neq 0$. Since the set of weights $\Phi(V,T)$ is $W$-stable, the corollary is proved.
\end{proof}

Return to the setting that $F$ is an arbitrary field with algebraic closure $\Fbar$.
\begin{lemma}
\label{lem-weyl-gal-same-length}
Assume $\GG$ is an adjoint, simple $F$-group. Then two roots $\alpha, \beta \in \Phi$ have the same length if and only if they are conjugate under $W \rtimes \galf$.
\end{lemma}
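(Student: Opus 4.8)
The plan is to treat the two implications separately, the backward one being immediate and the forward one requiring the structure theory of $F$-simple adjoint groups. For the ``if'' direction, recall from \S\ref{sec-root-system} that the form $(,)$ on $X^*(\tilde\TT)_{\QQ}$ is chosen $W \rtimes \galf$-invariant and positive definite; hence every element of $W \rtimes \galf$ acts as an isometry of $X^*(\tilde\TT)_{\QQ}$, and $W \rtimes \galf$-conjugate roots necessarily have the same length. The content of the lemma is therefore the converse.

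For the ``only if'' direction, I would first decompose the absolute root system. Since $\GG$ is adjoint, $\GG_{\Fbar}$ is the direct product of its absolutely simple factors, and correspondingly $\Phi$ decomposes into irreducible components $\Phi = \coprod_{i=1}^{n}\Phi_i$, with Weyl group $W = \prod_{i=1}^{n} W_i$ where $W_i$ is the Weyl group of $\Phi_i$. Because $\TT$ is defined over $F$, the group $\galf$ acts on $X^*(\TT)$ preserving $\Phi$, and this action permutes the components $\{\Phi_i\}$; it also normalizes $W$, so that $W \rtimes \galf$ acts on $\Phi$. The key structural input is that $F$-simplicity forces the permutation action of $\galf$ on $\{\Phi_i\}$ to be transitive: a proper nonempty $\galf$-stable subset of components would span a Galois-stable sub-root-system, whose associated product of simple factors is a Galois-stable subgroup of $\GG_{\Fbar}$ and hence descends by Galois descent to a proper nontrivial connected normal $F$-subgroup of $\GG$, contradicting $F$-simplicity. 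In particular all the $\Phi_i$ are carried to one another by $\galf$, so they are pairwise isomorphic irreducible root systems.

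With this in hand the conclusion is quick. Suppose $\alpha \in \Phi_i$ and $\beta \in \Phi_j$ have the same length. Choose $\sigma \in \galf$ with $\sigma\Phi_i = \Phi_j$; since $(,)$ is $\galf$-invariant, $\sigma\alpha \in \Phi_j$ has the same length as $\alpha$, hence as $\beta$. Now $\sigma\alpha$ and $\beta$ lie in the \emph{single} irreducible component $\Phi_j$, so by the classical fact that the Weyl group of an irreducible root system acts transitively on the roots of a given length, there is $w \in W_j \subset W$ with $w(\sigma\alpha) = \beta$. Then $w\sigma \in W \rtimes \galf$ sends $\alpha$ to $\beta$, as desired. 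When $\GG$ is absolutely simple one has $n = 1$ and this is just the classical statement with $\galf$ playing no role; the Galois factor is needed precisely to pass between the distinct components that appear when $\GG_{\Fbar}$ is not simple.

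I expect the main obstacle to be the transitivity claim of the second paragraph, \ie justifying cleanly that a $\galf$-stable union of components descends to a normal $F$-subgroup so that $F$-simplicity can be invoked. Once the components are known to form a single $\galf$-orbit, the comparison of lengths across components is legitimate exactly because the form is $\galf$-invariant, and the remaining input is the standard within-component Weyl transitivity.
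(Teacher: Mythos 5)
Your proof is correct and follows essentially the same route as the paper: the backward direction from $W\rtimes\galf$-invariance of $(,)$, and the forward direction by using $F$-simplicity to move $\alpha$ into the irreducible component containing $\beta$ via some $\sigma\in\galf$ and then invoking Weyl-group transitivity on roots of a given length within an irreducible system. The only difference is that you spell out the Galois-descent justification for the transitivity of $\galf$ on the components, which the paper simply asserts; that elaboration is sound and harmless.
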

\begin{proof}
The action of $W \rtimes \galf$ preserves length (recall from \S\ref{sec-root-system} that the bilinear form $(,)$ is chosen $W \rtimes \galf$-invariant).

Conversely, assume $\alpha, \beta \in \Phi$ are two roots of equal length. Since $\GG$ is simple over $F$, the $\Fbar$-simple factors of $\GG_{\Fbar}$ are permuted transitively by $\galf$.  Thus there exist $\Fbar$-simple factors $G_i$ and $G_j$ of $\GG_{\Fbar}$, with respective maximal tori $T_i$, $T_j$ contained in $\TT_{\Fbar}$, such that the root systems $(X^*(T_i), \Phi_i, (,)),(X^*(T_j), \Phi_j, (,))$ are naturally irreducible components of the root system $(X^*(\TT)_{\QQ}, \Phi, (,))$ and $\alpha \in \Phi_i$, $\beta \in \Phi_j$. Let $\sigma_0 \in \galf$ map $\Phi_i$ to $\Phi_j$. In a reduced and irreducible root system, two roots are conjugate under the Weyl group if and only if they have the same length; apply this to the system $\Phi_j$ and the roots $\sigma_0 \alpha, \beta$.
\end{proof}

\subsection{Root-theoretic analysis of the Griffiths module}
\label{sec-proof-specific}
Consider the setting of Theorem~\ref{th-main}: Let $\GG$ be a connected, reductive $F$-group, $V$  an $F$-vector space and $r:\GG \to GL(V)$ a morphism $F$-groups with central kernel.
Let $\mu \in X_*(\GG)$.

The expression of $\mu^{\ad}$ in the basis of fundamental coweights (\S\ref{sec-fund-weights}) in $X_*(\TT^{\ad})_{\QQ}$ is $$\mu^{\ad}=\sum_{\alpha \in \Delta} \langle \alpha, \mu \rangle \eta(\alpha^{\vee}).$$
Since $\alpha^{\vee}$ is identified with $2\alpha/(\alpha, \alpha)$ in $X^*(\tilde \TT)_{\QQ}$ via $(,)$, $\mu^{\ad}$ is identified with the linear combination $$\sum_{\alpha \in \Delta}\langle \alpha, \mu \rangle\frac{2}{(\alpha, \alpha)} \eta(\alpha) $$ in the basis of fundamental weights in $X^*(\tilde \TT)_{\QQ}$. Therefore, in Theorem~\ref{th-main},~\ref{item-grif-fund-weights} and~\ref{item-indep-alpha} jointly imply~\ref{item-griffiths-explicit}.
In turn,~\ref{item-griffiths-explicit} trivially implies~\ref{item-ray-equality}. Thus it suffices to prove  ~\ref{item-grif-fund-weights} and~\ref{item-indep-alpha}. Part~\ref{item-grif-fund-weights} is proved in Lemma~\ref{lem-compute-grif-alpha}. Part~\ref{item-indep-alpha} is shown first for roots with the same length in Lemma~\ref{lem-sum-same-length}. The general case is reduced to that of equal length in Lemma~\ref{lem-sum-diff-length}.

\begin{lemma}
\label{lem-compute-grif-alpha}
For all $\alpha \in \Delta$, one has
\begin{equation}
\label{eq-lemma-compute-grif-alpha}
\langle \grif(\GG,\mu,r), \alpha^{\vee} \rangle
=
-\frac{1}{2} \langle \alpha, \mu \rangle
S(\alpha^{\vee}, r).
\end{equation}
\end{lemma}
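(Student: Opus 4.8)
The plan is to compute both sides directly from the definition of the Griffiths character as a determinant, reduce everything to a single sum over the weights $\Phi(V,\TT)$, and then exploit the $s_\alpha$-invariance of the weight multiset to evaluate that sum.

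First I would unwind the Griffiths module into weight spaces. Writing $N_{\max}:=(r\circ\mu)_{\max}$ and $N_{\min}:=(r\circ\mu)_{\min}$, and noting that $\fil^a V = \bigoplus_{b\le -a} V_b$ where $V_b$ is the $b$-weight space of $r\circ\mu$, a count of how many indices $a$ in the summation range $1-N_{\max}\le a\le -N_{\min}$ of~\eqref{eq-def-Grif} satisfy $V_b\subset\fil^a V$ shows that each $V_b$ occurs in $\Grif(\GG,\mu,r)$ with multiplicity $N_{\max}-b$. Each $V_b$ is an $\LL$-submodule since $\LL=\cent_{\Fbar}(\mu)$ centralizes $\mu$, so as a character of $\LL$
$$
\grif(\GG,\mu,r)=\det\Grif(\GG,\mu,r)=\sum_b (N_{\max}-b)\det V_b .
$$
Since $\det V_b=\sum_{\chi:\langle\chi,\mu\rangle=b} m_V(\chi)\,\chi$, pairing with $\alpha^\vee$ and regrouping into a single sum over all $\chi\in\Phi(V,\TT)$ gives
$$
\langle\grif(\GG,\mu,r),\alpha^\vee\rangle=N_{\max}\sum_{\chi}m_V(\chi)\langle\chi,\alpha^\vee\rangle-\sum_{\chi}m_V(\chi)\langle\chi,\mu\rangle\langle\chi,\alpha^\vee\rangle .
$$

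The heart of the argument is then to evaluate the two sums via the substitution $\chi\mapsto s_\alpha\chi$, which permutes $\Phi(V,\TT)$ and preserves the multiplicities $m_V$. For the first sum, since $\langle s_\alpha\chi,\alpha^\vee\rangle=-\langle\chi,\alpha^\vee\rangle$, the substitution shows it equals its own negative and hence vanishes; this makes the entire $N_{\max}$-term drop out, so the precise constant in the multiplicity is immaterial. For the second sum $T:=\sum_{\chi}m_V(\chi)\langle\chi,\mu\rangle\langle\chi,\alpha^\vee\rangle$, the same substitution combined with $\langle s_\alpha\chi,\mu\rangle=\langle\chi,\mu\rangle-\langle\chi,\alpha^\vee\rangle\langle\alpha,\mu\rangle$, after expanding, yields $T=-T+\langle\alpha,\mu\rangle S(\alpha^\vee,r)$, whence $T=\tfrac12\langle\alpha,\mu\rangle S(\alpha^\vee,r)$. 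Combining the two evaluations gives $\langle\grif(\GG,\mu,r),\alpha^\vee\rangle=-\tfrac12\langle\alpha,\mu\rangle S(\alpha^\vee,r)$, which is~\eqref{eq-lemma-compute-grif-alpha}.

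I expect the only genuine subtlety to be the bookkeeping in the first step: correctly reading off the multiplicity $N_{\max}-b$ of each weight space from the summation bounds, and keeping straight Deligne's $z^{-a}$ convention that relates $\Gr^a$ to the $(-a)$-weight space. The weight-symmetrization in the last step is the conceptual core but is short once set up — it is the same $s_\alpha$-averaging already used in the remark following~\eqref{eq-weight-pairing-sum} and in the proof of Corollary~\ref{cor-weights-not-all-root-hyperplane} — so I anticipate no real obstacle there.
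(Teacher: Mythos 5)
Your proposal is correct and follows essentially the same route as the paper: both compute the multiplicity of each weight $\chi$ in $\Grif(\GG,\mu,r)$ as $m_V(\chi)\bigl((r\circ\mu)_{\max}-\langle\chi,\mu\rangle\bigr)$ and then evaluate the pairing with $\alpha^{\vee}$ by exploiting the $s_\alpha$-invariance of the weight multiset, which kills the constant $(r\circ\mu)_{\max}$-term and produces the factor $-\tfrac12\langle\alpha,\mu\rangle S(\alpha^{\vee},r)$. The only cosmetic differences are that the paper obtains the multiplicity formula by pulling back $\Grif(GL(V),r\circ\mu,\id)$ along $r$ and groups the terms in pairs $\{\chi,s_\alpha\chi\}$, whereas you decompose $V$ into $\mu$-weight spaces directly and apply the substitution to the two sums separately.
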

\begin{proof}
By definition of the Griffiths module~\eqref{eq-def-Grif}, the set of $\TT_{\Fbar}$-weights of $r$ is contained in that of $\Grif(\GG, \mu, r)$ (not counting multiplicities): $\Phi(\Grif(\GG, \mu, r), \TT) \subset \Phi(r, \TT)$ (\S\ref{sec-weights}).
To understand $\Grif(\GG, \mu,r)$, we must determine the multiplicity of a weight $\chi$ in $\Grif(\GG, \mu,r)$ in terms of its multiplicity in $(V,r)$. Let $\LL_V:=
\cent_{GL(V_{\CC})}(r\circ \mu)$.
Choose a maximal torus $\TT_V$ in $\LL_V$ containing $r(\TT)$.
Let $\id:GL(V) \to GL(V)$ be the identity. By construction, we have an equality of $\LL$-representations:
$$ \Grif(\GG, \mu, r)=
r^*\Grif(GL(V), r \circ \mu, \id).
$$

The $\TT_V$-weights of $\id$ all have multiplicity one ($\id$ is minuscule).
The multiplicity of a $\TT_V$-weight $\tilde \chi$ in $\Grif(GL(V), r \circ \mu, \id)$ is given by the distance to the top of the filtration, namely \begin{equation}
\label{eq-mult-GL(V)}
(r\circ \mu)_{\max}
-\langle \tilde \chi, r \circ \mu \rangle
.
\end{equation} Let $\chi=r^*\tilde \chi$. Then
$\langle \tilde \chi, r \circ \mu \rangle
=\langle\chi, \mu \rangle $.
Since there are precisely $m_V(\chi)$ different $\TT_V$-weights which pull back to $\chi$, the multiplicity of $\chi$ in $\Grif(\GG,\mu, r)$ is
\begin{equation}
\label{eq-mult-G}
m_V(\chi)( (r \circ \mu)_{\max}-\langle \chi, \mu \rangle).  \end{equation}
Since $\grif(\GG,\mu,r)$ is defined~\eqref{eq-def-grif} as the determinant of $\Grif(\GG,\mu,r)$,
\begin{equation}
\label{eq-grif-alpha-sum}
\langle \grif(\GG,\mu,r), \alpha^{\vee} \rangle
=\sum_{\chi \in \Phi(V, \TT)}
m_{\Grif(\GG,\mu,r)}(\chi)
\langle \chi, \alpha^{\vee}
\rangle.
\end{equation}
If $\chi=s_{\alpha} \chi$, then
$\langle \chi, \alpha^{\vee} \rangle=0$, so $\chi$ contributes zero in the sum~\eqref{eq-grif-alpha-sum}.
We group the remaining terms in~\eqref{eq-grif-alpha-sum} in pairs $\{\chi, s_{\alpha} \chi \}$.
Recalling that
$\langle s_{\alpha} \chi, \alpha^{\vee} \rangle=
-\langle \chi, \alpha^{\vee}\rangle $ and that the multiplicities  $m_V(\chi)$ are $W$-invariant gives
\begin{equation}
\label{eq-salpha-chi-minus-chi}
\langle \grif(\GG,\mu,r), \alpha^{\vee} \rangle=
\frac{1}{2}
\sum_{\chi \in \Phi(V, \TT)}
m_V(\chi) \langle \chi, \alpha^{\vee} \rangle \langle s_{\alpha} \chi -\chi, \mu \rangle.
\end{equation}
Substituting the definition $s_{\alpha}\chi=\chi-\langle \chi, \alpha^{\vee}  \rangle \alpha$ into~\eqref{eq-salpha-chi-minus-chi} yields~\eqref{eq-lemma-compute-grif-alpha}.
\end{proof}
Recall that $\mu$ is normalized to be $\Delta$-dominant.
\begin{corollary}
\label{cor-anti-ample}
The Griffiths character $\grif(\GG,\mu,r)$ is $\Delta$-anti-dominant. Furthermore, \begin{equation}
    \label{eq-anti-dom}
\langle \grif(\GG,\mu,r), \alpha^{\vee} \rangle<0
\end{equation} if and only if $\alpha \in \Delta \setminus \Delta_{\LL}$.
\end{corollary}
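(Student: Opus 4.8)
The plan is to extract everything directly from the formula of Lemma~\ref{lem-compute-grif-alpha}, namely
\begin{equation*}
\langle \grif(\GG,\mu,r), \alpha^{\vee} \rangle
=
-\frac{1}{2}\langle \alpha, \mu \rangle\, S(\alpha^{\vee},r)
\end{equation*}
for $\alpha \in \Delta$. The point is that both factors on the right are nonnegative, so the product is anti-dominant; the real content is then to pin down exactly when the inequality is strict.

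First I would record the two sign inputs. The sum $S(\alpha^{\vee},r)=\sum_{\chi \in \Phi(V,\TT)} m_V(\chi)\langle \chi, \alpha^{\vee}\rangle^2$ is a sum of nonnegative terms, hence $S(\alpha^{\vee},r)\geq 0$; and since $\mu$ is normalized to be $\Delta$-dominant (\S\ref{sec-cocharacter-data}), one has $\langle \alpha, \mu\rangle \geq 0$ for every $\alpha \in \Delta$. Plugging these into the displayed formula gives $\langle \grif(\GG,\mu,r), \alpha^{\vee}\rangle \leq 0$ for all simple $\alpha$, which is precisely $\Delta$-anti-dominance.

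For the strict inequality, the key observation is that $S(\alpha^{\vee},r)$ is in fact \emph{strictly} positive for every $\alpha \in \Delta$, and this is exactly where the central-kernel hypothesis on $r$ is used. By Lemma~\ref{lem-weights-not-all-root-hyperplane}, for each simple root $\alpha$ there exists a weight $\chi \in \Phi(V,\TT)$ with $\langle \chi, \alpha^{\vee}\rangle \neq 0$; the corresponding summand $m_V(\chi)\langle \chi, \alpha^{\vee}\rangle^2$ is then strictly positive, forcing $S(\alpha^{\vee},r)>0$. Granting this, the sign of $\langle \grif(\GG,\mu,r), \alpha^{\vee}\rangle$ is governed entirely by the factor $\langle \alpha, \mu\rangle$: it is strictly negative precisely when $\langle \alpha, \mu\rangle > 0$. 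By the definition $\Delta_{\LL}=\{\alpha \in \Delta \mid \langle \alpha, \mu\rangle=0\}$ together with the dominance of $\mu$, the condition $\langle \alpha, \mu\rangle>0$ is equivalent to $\alpha \in \Delta \setminus \Delta_{\LL}$, yielding the claimed equivalence.

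The argument is thus essentially a bookkeeping of signs, and I expect no serious obstacle: the only input that is not formal is the positivity $S(\alpha^{\vee},r)>0$, which fails for a general representation and genuinely relies on the central-kernel assumption through Lemma~\ref{lem-weights-not-all-root-hyperplane}. Without that lemma one could still deduce anti-dominance, but not the sharp description of the locus where the pairing is nonzero; so the characterization of the strict locus is exactly what that lemma buys.
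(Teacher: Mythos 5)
Your argument is correct and is essentially the paper's own proof: both read everything off the formula of Lemma~\ref{lem-compute-grif-alpha}, use the $\Delta$-dominance of $\mu$ and the manifest nonnegativity of $S(\alpha^{\vee},r)$ for anti-dominance, and invoke Lemma~\ref{lem-weights-not-all-root-hyperplane} to get strict positivity of $S(\alpha^{\vee},r)$, so that strict negativity of the pairing is equivalent to $\langle \alpha,\mu\rangle>0$, i.e.\ to $\alpha\in\Delta\setminus\Delta_{\LL}$. Your explicit remark that the central-kernel hypothesis enters only through the strict positivity of $S(\alpha^{\vee},r)$ is a correct and slightly more careful accounting than the paper gives.
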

\begin{proof}
The anti-dominance is clear from~\eqref{eq-weight-pairing-sum} and~\eqref{eq-lemma-compute-grif-alpha}. Moreover, if $\alpha \in \Delta_{\LL}$, then $\langle \alpha, \mu \rangle=0$ since $\LL=\cent_{\GG_{\CC}}(\mu)$, so $\langle \grif(G,h,r), \alpha^{\vee} \rangle=0
$ again by~\eqref{eq-lemma-compute-grif-alpha}.

Finally, assume $\alpha \in \Delta \setminus \Delta_{\LL}$. Since $\mu$ is assumed $\Delta$-dominant, one has $\langle  \alpha, \mu \rangle>0$. By Lemma~\ref{lem-weights-not-all-root-hyperplane}, one of the terms in the sum~\eqref{eq-weight-pairing-sum} is nonzero, so $S(\mu, r, \alpha)<0$.
\end{proof}
\begin{lemma}
\label{lem-sum-same-length}
Assume $\GG^{\ad}$ is a simple $F$-group.
If $\alpha, \beta \in \Phi$ have the same length, then
\begin{equation}
\label{eq-sum-same-length}
S(\alpha^{\vee}, r)
=
S(\beta^{\vee}, r)
\end{equation}
\end{lemma}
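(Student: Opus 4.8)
The plan is to deduce the equality from the $W \rtimes \galf$-invariance of the weighted weight multiset of $r$, using Lemma~\ref{lem-weyl-gal-same-length} to convert the equal-length hypothesis into conjugacy. Since $\GG^{\ad}$ is $F$-simple and the roots and coroots of $\GG$ are identified with those of $\GG^{\ad}$ (\S\ref{sec-Gad-tildG-root-data}), Lemma~\ref{lem-weyl-gal-same-length} applied to $\GG^{\ad}$ furnishes an element $\sigma \in W \rtimes \galf$ with $\sigma \alpha = \beta$, and hence $\sigma \alpha^{\vee} = \beta^{\vee}$, since the action preserves the bijection $\Phi \to \Phi^{\vee}$. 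Thus it suffices to prove that $S(\sigma \gamma^{\vee}, r) = S(\gamma^{\vee}, r)$ for every $\gamma \in \Phi$ and every $\sigma \in W \rtimes \galf$.

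The key input is that the multiset of $\TT_{\Fbar}$-weights of $V$, weighted by the multiplicities $m_V(\chi)$, is stable under $W \rtimes \galf$. The $W$-invariance is the standard fact that Weyl-conjugate weights of a representation have equal multiplicity. For the $\galf$-invariance I would use that $r \in \Rep_F(\GG)$ is defined over $F$: since $\TT$ is an $F$-torus, $\galf$ acts on $X^*(\TT)$, and the semilinear Galois action on $V \otimes_F \Fbar$ carries the weight space $V_{\chi}$ to $V_{\sigma \chi}$, whence $m_V(\chi) = m_V(\sigma \chi)$ for all $\sigma \in \galf$. Combined, the function $\chi \mapsto m_V(\chi)$ is constant on $W \rtimes \galf$-orbits, and the set $\Phi(V,\TT)$ is itself $W \rtimes \galf$-stable.

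With this in hand the computation is a change of variables. Using that the perfect pairing $\langle, \rangle$ is $W \rtimes \galf$-invariant (\S\ref{sec-root-system}), one has $\langle \chi, \sigma \gamma^{\vee} \rangle = \langle \sigma^{-1}\chi, \gamma^{\vee} \rangle$, so
\[
S(\sigma \gamma^{\vee}, r) = \sum_{\chi \in \Phi(V,\TT)} m_V(\chi)\, \langle \sigma^{-1} \chi, \gamma^{\vee} \rangle^2 .
\]
Re-indexing the sum by $\chi' := \sigma^{-1}\chi$ — legitimate because $\Phi(V,\TT)$ is $\sigma$-stable — and invoking $m_V(\sigma \chi') = m_V(\chi')$ returns exactly $S(\gamma^{\vee}, r)$. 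Applying this to the element $\sigma$ above with $\gamma = \alpha$ yields $S(\beta^{\vee}, r) = S(\sigma \alpha^{\vee}, r) = S(\alpha^{\vee}, r)$, as desired.

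The only genuinely delicate point is the $\galf$-invariance of the multiplicities; everything else is formal. The main care I would take is to ensure that the Galois action on $X^*(\TT)$ invoked here is precisely the one under which the form $(,)$ and the pairing $\langle,\rangle$ were declared invariant in \S\ref{sec-root-system}, so that the change of variables and the conjugacy supplied by Lemma~\ref{lem-weyl-gal-same-length} refer to one and the same group action.
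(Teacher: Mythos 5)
Your argument is correct and is essentially the paper's own proof: both reduce the equal-length hypothesis to $W \rtimes \galf$-conjugacy via Lemma~\ref{lem-weyl-gal-same-length}, use that $\Phi(V,\TT)$ and the multiplicities $m_V(\chi)$ are $W \rtimes \galf$-invariant because $r$ is defined over $F$, and conclude by the substitution $\chi \mapsto \sigma^{-1}\chi$ using the invariance of the pairing. Your extra care about passing from $\sigma\alpha = \beta$ to $\sigma\alpha^{\vee} = \beta^{\vee}$ and about the semilinear Galois action on weight spaces only makes explicit what the paper leaves implicit.
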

\begin{proof}
Assume $\alpha, \beta \in \Phi$ have the same length. By Lemma~\ref{lem-weyl-gal-same-length}, there exists $\sigma \in W \rtimes \galf$ such that $\sigma \alpha^{\vee} =\beta^{\vee}$.   One has
\begin{equation}
S(\beta^{\vee}, r)= S(\sigma \alpha^{\vee}, r)
=
\sum_{\chi \in \Phi(V,\TT)}m_V(\chi)
\langle
\chi, \sigma \alpha^{\vee}
\rangle^2
\end{equation}
Since $r$ is a morphism of $F$-groups and $\TT$ is an $F$-torus, the set of weights $\Phi(V, \TT)$ is stable under $W \rtimes \galf$ and $m_V(\chi)=m_V(\sigma \chi)$ for all $\sigma \in W \rtimes \galf$.
Applying the orthogonality of $W$ relative $\langle, \rangle$,  together with the substitution $\sigma^{-1} \chi \mapsto \chi$ gives~\eqref{eq-sum-same-length}.
\end{proof}
We pause to note that we have shown the simply-laced case of the main result:
\begin{proof}[Proof of \textnormal{Corollary~\ref{cor-main-simply-laced}:}] Combine Lemma~\ref{lem-compute-grif-alpha}, Corollary~\ref{cor-anti-ample} and Lemma~\ref{lem-sum-same-length}.
\end{proof}
\subsubsection{Root strings}
\label{sec-root-strings}
To treat the multi-laced case, given two roots $\alpha, \beta \in \Phi$, we will use what Knapp calls the "$\alpha$-root-string containing $\beta$"
(see the paragraph preceding \Prop~2.48 in \cite{Knapp-beyond-intro-book}) and what Bourbaki call the "$\alpha$-cha\^ine de racines d\'efinie par $\beta$",
\cite[ \Chap~6, \no ~1.3]{bourbaki-lie-4-6}. The preceding synonymous terminology refers to the set $(\beta+\ZZ\alpha) \cap \Phi$.

Assume $\alpha,\beta \in \Delta$ is a pair of adjacent simple roots. Recall from \cite[\Chap~6,~\no~1.3, \Prop~9]{bourbaki-lie-4-6}) that the length $\card((\beta+\ZZ\alpha) \cap \Phi)-1$ of the $\alpha$-root string containing $\beta$ is then given as follows: If $\alpha$ is strictly longer than $\beta$, then the root string is $(\beta+\ZZ \alpha) \cap \Phi=\{\beta,\alpha+\beta\}$. Otherwise, $\alpha$ is at least as short as $\beta$ and the length of the root string is
\begin{comment}
see also 
\end{comment}
\begin{equation}
\label{eq-compute-root-string}
\card((\beta+\ZZ\alpha) \cap \Phi)-1=\langle \alpha, \beta^{\vee} \rangle \langle \beta, \alpha^{\vee} \rangle.
\end{equation}
Note that the right-hand side of~\eqref{eq-compute-root-string} is the number of edges connecting $\alpha$ and $\beta$ in the Dynkin diagram of $\Delta$, as defined for instance in \cite[\S9.5.1]{Springer-Linear-Algebraic-Groups-book}. 
Moreover, if $\alpha$ is strictly shorter than $\beta$, then all of the members of the root string $(\beta+\ZZ\alpha) \cap \Phi$ are short, except for the two endpoints $\beta$ and $\beta+\langle \alpha, \beta^{\vee} \rangle \langle \beta, \alpha^{\vee} \rangle \alpha$ which are both long.\footnote{The last claim is checked by expanding $(\beta+k \alpha, \beta+k \alpha)/(\alpha, \alpha)$ and plugging in the value of $2(\beta, \alpha)/(\alpha, \alpha)=\langle \beta, \alpha^{\vee}\rangle$.}
\begin{lemma}
\label{lem-sum-diff-length}
Assume $\GG^{\ad}$ is a simple $F$-group and $\alpha, \beta \in \Phi$. Then
\begin{equation}
\label{eq-sum-diff-length}
\frac{S(\alpha^{\vee}, r)}{S(\beta^{\vee}, r) }
=
\frac{(\beta, \beta)}{(\alpha, \alpha)}
.
\end{equation}
\end{lemma}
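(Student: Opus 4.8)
The plan is to reduce Lemma~\ref{lem-sum-diff-length} to a single statement about an invariant quadratic form and then invoke the uniqueness of invariant forms exactly as in \S\ref{sec-root-system}. First note that by Lemma~\ref{lem-sum-same-length} the value $S(\gamma^\vee,r)$ depends only on the length of $\gamma$; since $\GG^{\ad}$ is $F$-simple there are at most two root lengths, so it suffices to compare one short root with one long root, the equal-length case being immediate from Lemma~\ref{lem-sum-same-length} (both sides of~\eqref{eq-sum-diff-length} then equal $1$).

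Next I would rewrite $S$ in terms of the form $(,)$. Under the identification $X^*(\tilde{\TT})_{\QQ}\cong X_*(\tilde{\TT})_{\QQ}$ afforded by $(,)$, the coroot $\gamma^\vee$ corresponds to $2\gamma/(\gamma,\gamma)$, so $\langle\chi,\gamma^\vee\rangle=2(\chi,\gamma)/(\gamma,\gamma)$ and hence
\[
(\gamma,\gamma)^2\, S(\gamma^\vee,r)=4\,Q(\gamma),
\qquad
Q(\gamma):=\sum_{\chi\in\Phi(V,\TT)}m_V(\chi)\,(\chi,\gamma)^2 .
\]
Thus~\eqref{eq-sum-diff-length} is equivalent to the assertion that $(\gamma,\gamma)\,S(\gamma^\vee,r)=4Q(\gamma)/(\gamma,\gamma)$ is independent of $\gamma\in\Phi$, which would follow at once from an identity of symmetric bilinear forms $Q=c\cdot(,)$ for a single scalar $c$.

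The key step is therefore to prove $Q=c\,(,)$. The form $Q$ is $W\rtimes\galf$-invariant: the form $(,)$ was chosen invariant (\S\ref{sec-root-system}), and the weighted weight set $\{(\chi,m_V(\chi))\}$ is stable under $W\rtimes\galf$ because $r$ is an $F$-representation and $\TT$ is an $F$-torus (exactly the invariance used in the proof of Lemma~\ref{lem-sum-same-length}). Moreover $Q$ is positive definite: by Lemma~\ref{lem-weights-not-all-root-hyperplane} together with Lemma~\ref{lem-torus-rep} the weights $\Phi(V,\TT)$ span $X^*(\tilde{\TT})_{\QQ}$, so $Q(\gamma)=0$ forces $\gamma=0$. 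Since $\GG^{\ad}$ is $F$-simple, $\galf$ permutes the geometric irreducible components of $\Phi$ transitively while $W$ acts irreducibly on each, so $W\rtimes\galf$ acts irreducibly (indeed absolutely irreducibly) on $X^*(\tilde{\TT})_{\QQ}$; by the Schur-type uniqueness of nondegenerate invariant symmetric forms invoked in \S\ref{sec-root-system}, now applied to $W\rtimes\galf$ in place of $W$, any two such forms are proportional, whence $Q=c\,(,)$. Substituting back gives $S(\gamma^\vee,r)=4c/(\gamma,\gamma)$ and therefore~\eqref{eq-sum-diff-length}; this argument simultaneously yields part~\ref{item-indep-alpha} of Theorem~\ref{th-main}.

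The main obstacle I expect is upgrading the uniqueness-of-invariant-form input from $W$, where it is classical (\S\ref{sec-root-system}), to $W\rtimes\galf$ in the merely $F$-simple (non-geometrically-simple) case: one must verify that transitivity of $\galf$ on the geometric simple factors genuinely makes the reflection representation irreducible over $\QQ$. If one prefers to avoid this representation-theoretic input, the alternative is a direct root-string computation: reduce (via Lemma~\ref{lem-weyl-gal-same-length} and Lemma~\ref{lem-sum-same-length}) to a pair of adjacent simple roots $\alpha$ short and $\beta$ long in a single component, and evaluate $S(\alpha^\vee,r)$ and $S(\beta^\vee,r)$ from the structure of the $\alpha$-root strings through the weights, where the ratio $(\beta,\beta)/(\alpha,\alpha)$ enters precisely as the number of edges $\langle\alpha,\beta^\vee\rangle\langle\beta,\alpha^\vee\rangle$ in~\eqref{eq-compute-root-string}.
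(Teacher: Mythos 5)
Your argument is correct, but it is not the route the paper takes in the main text: there, Lemma~\ref{lem-sum-diff-length} is proved by reducing (via Lemma~\ref{lem-weyl-gal-same-length} and Lemma~\ref{lem-sum-same-length}) to a pair of adjacent simple roots with $\alpha$ strictly longer than $\beta$, applying the root-string analysis of \S\ref{sec-root-strings} to the \emph{coroot} system to see that $\alpha^{\vee}+\beta^{\vee}$ and $2\alpha^{\vee}+\beta^{\vee}$ are coroots of known lengths, expanding $S$ bilinearly on these sums as in~\eqref{eq-s-t-1}--\eqref{eq-s-t-2}, and solving for the ratio case by case (obtaining $2$ or $3$ according to the lacing). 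What you propose instead --- packaging the sums into the invariant quadratic form $Q(\gamma)=\sum_{\chi}m_V(\chi)(\chi,\gamma)^2$, checking $W\rtimes\galf$-invariance and definiteness, and invoking the one-dimensionality of the space of invariant forms when $\GG^{\ad}$ is $F$-simple --- is precisely Deligne's simplification reproduced in Appendix~\ref{sec-deligne-simplification} (his pairing $(,)_V$ of~\eqref{eq-app-perfect-pairing} is your $Q$ transported to $X_*(\TT)$), so you are in good company; it avoids both the root-string input and the doubly/triply-laced case division, at the cost of the representation-theoretic uniqueness statement. The ``obstacle'' you flag is not a real one: the uniqueness of the positive definite $W\rtimes\galf$-invariant form for $F$-simple $\GG^{\ad}$ is exactly what \S\ref{sec-root-system} asserts and is what both the paper and Deligne use, the point being that $\galf$ permutes the geometric factors transitively while the off-diagonal blocks of any invariant form vanish since the reflection representations of distinct factors admit no nonzero invariant pairing. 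Two small points of hygiene: the weights $\chi$ live in $X^*(\TT)$, so in $Q$ you should replace $\chi$ by its image in $X^*(\tilde\TT)_{\QQ}$ (the pairings $\langle\chi,\gamma^{\vee}\rangle$ only depend on that image), and positive definiteness on $X^*(\tilde\TT)_{\QQ}$ then follows because $r|_{\GG^{\der}}$ has finite kernel, as in the proofs of Lemma~\ref{lem-weights-not-all-root-hyperplane} and Corollary~\ref{cor-weights-not-all-root-hyperplane} --- citing Lemma~\ref{lem-weights-not-all-root-hyperplane} alone only rules out the weights lying in a single simple root hyperplane.
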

\begin{proof}
By Lemma~\ref{lem-sum-same-length} and its proof, since $\GG^{\ad}$ is simple over $F$ we reduce to the case that
\begin{enumerate}
    \item $\Delta$ is multi-laced,
    \item $\alpha, \beta \in \Delta$
    \item $\alpha, \beta$ are adjacent (in particular they belong to the same $\Fbar$-simple factor of $\GG^{\ad}$), and
    \item $\alpha$ is strictly longer than $\beta$.
\end{enumerate}

 Given $\gamma, \delta \in \Phi$,
let
$$T(\gamma^{\vee}, \delta^{\vee}, r):=\sum_{\chi \in \Phi(V, \TT)}\langle \chi, \gamma^{\vee} \rangle \langle \chi, \delta^{\vee} \rangle.$$
Since $\alpha, \beta$ are adjacent, $\alpha^{\vee}+\beta^{\vee}$ is a coroot.\footnote{Caution: While $\alpha+\beta$ is a root and $\alpha^{\vee}+\beta^{\vee}$ is a coroot, one has $(\alpha+\beta)^{\vee} \neq \alpha^{\vee}+\beta^{\vee}$ whenever $\alpha, \beta$ are adjacent of different lengths.} Since $\alpha$ is strictly longer than $\beta$, $\alpha^{\vee}$ is strictly shorter than $\beta^{\vee}$.

We now apply \S\ref{sec-root-strings}, but to the root system $\Phi^{\vee}$ of coroots. Thus, $\alpha^{\vee}+\beta^{\vee}$ is a short coroot and $\beta^{\vee}+2\alpha^{\vee}$ is a coroot. The coroot $\beta^{\vee}+2\alpha^{\vee}$ is long when $\Delta$ is doubly-laced (types $B_n,C_n,F_4$) but short when $\Delta$ is triply-laced (type $G_2)$.

One has
\begin{subequations}
\begin{equation}
\label{eq-s-t-1}
    S(\alpha^{\vee}+\beta^{\vee},r)=S(\alpha^{\vee},r)+S(\beta^{\vee},r)+2T(\alpha^{\vee},\beta^{\vee},r),
\end{equation}
\begin{equation}
\label{eq-s-t-2}
S(2\alpha^{\vee}+\beta^{\vee},r)=4S(\alpha^{\vee},r)+S(\beta^{\vee},r)+4T(\alpha^{\vee},\beta^{\vee},r).
\end{equation}
\end{subequations}
On the other hand, it follows from Lemma~\ref{lem-sum-same-length} that $S(\alpha^{\vee}+\beta^{\vee},r)=S(\alpha^{\vee}, r)$, since both $\alpha^{\vee}$ and $\alpha^{\vee}+\beta^{\vee}$ are short. Also, $S(2\alpha^{\vee}+\beta^{\vee},r)=S(\beta^{\vee}, r)$ or $S(2\alpha^{\vee}+\beta^{\vee},r)=S(\alpha^{\vee}, r)$ according to whether $\Delta$ is doubly or triply-laced respectively. Solving the two equations in each of the two cases for the ratio $S(\beta^{\vee}, r)/ S(\alpha^{\vee},r)$, one finds $S(\beta^{\vee}, r)/ S(\alpha^{\vee},r)=2$ (resp. $S(\beta^{\vee}, r)/ S(\alpha^{\vee},r)=3$) when $\Delta$ is doubly (resp. triply) laced.
\end{proof}
The proof of the main result is now complete:
\begin{proof}[Proof of \textnormal{Theorem~\ref{th-main}:}] Combine Lemma~\ref{lem-compute-grif-alpha}, Corollary~\ref{cor-anti-ample} and Lemma~\ref{lem-sum-diff-length}.
\end{proof}
\subsection{Nefness of the Griffiths bundle of \texorpdfstring{$\GGZip^{\mu}$}{G-Zip}-schemes}
\label{sec-nefness}

\begin{proof}[Proof of \textnormal{Corollary~\ref{cor-nef}:}]
Let $\zeta:X \to \GGZip^{\mu}$ as in Corollary~\ref{cor-nef}.
 Write $\Gscr:=\grif(\GGZip^{\mu}, r)$ for the Griffiths line bundle  on $\GGZip^{\mu}$ associated to some choice of $r$ with central kernel (\S\ref{sec-hodge-line-bundle-GZip}) and put $\Gscr_X:=\zeta^*\Gscr$. In view of Theorem~\ref{th-main} nothing in the argument below will depend on the auxiliary choice of $r$.

Assume that $\mu$ is orbitally $p$-close (\S\ref{sec-cond-characters}). By Theorem~\ref{th-main}, the Griffiths ray is also orbitally $p$-close (by definition, one element of a ray is orbitally $p$-close if and only if all are). Thus the Griffiths character is a \underline{Hasse generator} of $\GGZip^{\mu}$ by \cite[\Th~3.2.3]{Goldring-Koskivirta-Strata-Hasse}, \ie for every stratum $\Xcal_w$ with Zariski closure $\overline{\Xcal}_w$ there exists $n_w>0$ and a section $h_w \in H^0(\overline{\Xcal}_w, \Gscr^{n_w})$ such that the nonvanishing locus of $h_w$ is precisely $\Xcal_w$.

In \cite{Brunebarbe-Goldring-Koskivirta-Stroh-ampleness}, it is shown that any line bundle corresponding to any Hasse generator is nef on a projective $\GGZip^{\mu}$-scheme $X$. Indeed, suppose $Z \subset X$ is a closed, integral subscheme. Then we have a nonzero section $h_Z$ in $H^0(Z, \Gscr_X)$ as follows: Since $Z$ is irreducible, there exists a unique stratum $\Xcal_w$ in $\GGZip^{\mu}$ such that the intersection $\zeta^*\Xcal_w \cap Z$ is dense in $Z$. Then take $h_Z:=\zeta^*h_w$.
\end{proof}
\appendix
\section{Deligne's Simplification}
\label{sec-deligne-simplification}
We present Deligne's elegant simplification of the proof of Theorem~\ref{th-main}\ref{item-ray-equality} following a letter from him \cite{Deligne-griffiths-character-letter}. In particular, Deligne's argument does not use root strings and it does not differentiate according to the lacing of the Dynkin diagram of $\Delta$.

Deligne's argument works in the setting of semisimple $\GG$, so we first explain in \S\ref{sec-app-reduction-semisimple} how to reduce Theorem~\ref{th-main}\ref{item-ray-equality} to that case. The reduction is a formality, so the reader who believes it may happily jump to Deligne's argument proper in \S\ref{sec-app-Deligne-argument}.

\subsection{Reduction to the semisimple case}
\label{sec-app-reduction-semisimple}
The proof of Lemma~\ref{lem-compute-grif-alpha}, up to~\eqref{eq-mult-G}, gives the equality of characters
\begin{equation}
\label{eq-app-grif-alpha-sum}
\grif(\GG,\mu,r)
=\sum_{\chi \in \Phi(V, \TT)}
m_V(\chi)( (r \circ \mu)_{\max}-\langle \chi, \mu \rangle)
\chi.
\end{equation}
Note that the next step~\eqref{eq-grif-alpha-sum} in the proof of Lemma~\ref{lem-compute-grif-alpha} is the result of pairing both sides of~\eqref{eq-app-grif-alpha-sum} with $\alpha^{\vee}$ but we now avoid doing this. As $\TT$-representations, one has 
\begin{equation}
\label{eq-app-constant-term}
\sum_{\chi \in \Phi(V, \TT)}
m_V(\chi)
\chi=\det V.
\end{equation}
Let $\iota:\GG^{\der} \to \GG$ be the inclusion. Since $\GG^{\der}$ is semisimple, the pullback $\iota^*\det V$ of $\det V$ to $\GG^{\der}$ is trivial. Therefore the image of ~\eqref{eq-app-constant-term} in $X^*( \TT^{\der})$ is zero. Pulling back~\eqref{eq-app-grif-alpha-sum} to $X^*(\TT^{\der})$, the first term on the right pulls back to zero, so
\begin{equation}
\label{eq-app-simple-grif-alpha-sum}
\iota^*\grif(\GG,\mu,r)
=-\sum_{\chi \in \Phi(V, \TT)}
m_V(\chi)\langle \chi, \mu \rangle
\iota^*\chi.
\end{equation}
It follows from~\eqref{eq-app-simple-grif-alpha-sum} that the validity of Theorem~\ref{th-main}\ref{item-ray-equality} is invariant under replacing $\mu$ with a (strictly) positive scalar multiple. Hence we may assume that $\mu^{\ad}$ arises from a cocharacter of $\TT_{\Fbar}^{\der}$, which we denote the same way. For such $\mu^{\ad}$, the cocharacter $\mu-\iota_*\mu^{\ad}$ of $\TT_{\Fbar}$ is central in $\GG_{\Fbar}$. 

For any cocharacter $\nu \in X_*(\TT)$ which is central in $\GG_{\Fbar}$, the sum 
\begin{equation}
\label{eq-app-sum-central-cocharacter}
\sum_{\chi \in \Phi(V, \TT)}m_V(\chi)\langle \chi, \nu \rangle \chi 
\end{equation}
is a character of $\GG_{\Fbar}$. One can see this in two ways: Since $\nu$ is central, by Schur's Lemma it acts on each $\GG_{\Fbar}$-irreducible constituent of $V_{\Fbar}$ by a character of $\GG_{\Fbar}$; the action of $\nu$ on $\det V_{\Fbar}$ is then given by~\eqref{eq-app-sum-central-cocharacter}. Alternatively, one checks that the pairing of~\eqref{eq-app-sum-central-cocharacter} with every coroot $\alpha^{\vee}$ is zero by the method of proof in Lemma~\ref{lem-compute-grif-alpha}, \ie grouping the summands with $\langle \chi, \alpha^{\vee} \rangle \neq 0$ in pairs $\{\chi, s_{\alpha} \chi \}$. As before, the pullback of~\eqref{eq-app-sum-central-cocharacter} to $X^*(\TT^{\der})$ is zero.

Applying the above with $\nu=\mu-\iota_*\mu^{\ad}$ and using~\eqref{eq-app-simple-grif-alpha-sum}, we conclude that 
\begin{equation}
\label{eq-app-grif-derived}
\iota^*\grif(\GG, \mu, r)=-\sum_{\chi \in \Phi(V, \TT)}m_V(\chi)\langle \iota^*\chi, \mu^{\ad} \rangle \iota^*\chi= \grif(\GG^{\der}, \mu^{\ad}, r^{\der}),    
\end{equation}
where $r^{\der}=\iota^*r$ is the restriction of $r$ to $\GG^{\der}$. It follows that Theorem~\ref{th-main}\ref{item-ray-equality} for $(\GG, \mu^{\ad}, r^{\der})$ implies it for $(\GG, \mu, r)$.
\subsection{Deligne's argument}
\label{sec-app-Deligne-argument}
By \S\ref{sec-app-reduction-semisimple}, we may henceforth assume that $\GG$ is semisimple. We may also assume that $\mu \neq 0$, as otherwise both sides of Theorem~\ref{th-main}\ref{item-ray-equality} are trivial.  Define a pairing on $X_*(\TT)$ by setting, for all $\mu, \mu' \in X_*(\TT)$,
\begin{equation}
\label{eq-app-perfect-pairing}
    (\mu,\mu')_V=\sum_{\chi \in \Phi(V, \TT)}m_V(\chi)\langle \chi, \mu \rangle\langle \chi, \mu' \rangle
\end{equation}
Since the $\GG$-representation $V$ is defined over $F$, the pairing $(,)_V$ is $W \rtimes \galf$-invariant. It is positive definite by Corollary~\ref{cor-weights-not-all-root-hyperplane} and thus induces an isomorphism $X^*(\TT)_{\QQ} \stackrel{\sim}{\rightarrow} X_*(\TT)_{\QQ}$. Under this isomorphism, $\grif(\GG, \mu, r)$ maps to $-\mu$ by~\eqref{eq-app-simple-grif-alpha-sum}. Since $\GG^{\ad}$ is $F$-simple, any other positive definite, $W \rtimes \galf$-invariant pairing $(,)$ on $X_*(\TT)_{\QQ}$ is a positive scalar multiple of $(,)_V$ (\S\ref{sec-root-system}). This proves Theorem~\ref{th-main}\ref{item-ray-equality}.  
\begin{rmk}
\label{rmk-app-canonical-bilinear-form}
When $V=\Ad$ is the adjoint representation, the pairing~\eqref{eq-app-perfect-pairing} is the inverse of the "canonical bilinear form" of \cite[\Chap~6,~\S1.12]{bourbaki-lie-4-6}.

\end{rmk}
\begin{comment}
\section{Deligne's argument for \texorpdfstring{$G=GL(n)$}{G=GL(n)}}
\end{comment}

\bibliographystyle{plain}
\bibliography{biblio_overleaf}

\end{document}